
\documentclass[12pt]{amsart}
\usepackage{amsmath,amsthm,amssymb,fullpage}
\usepackage{epsfig}

\newcommand{\size}[1]{\left \vert #1 \right \vert}
\newcommand{\ceil}[1]{\left \lceil #1 \right \rceil}
\newcommand{\floor}[1]{\left \lfloor #1 \right \rfloor}
\newcommand{\prob}[1]{\mathrm{Pr}\left [ #1 \right ]}
\newcommand{\expect}[1]{\mathbb{E} \left [ #1 \right ]}
\newcommand{\eps}{\varepsilon}

\newcommand{\N}{{\mathbb N}}
\DeclareMathOperator*{\tg}{t} 
\DeclareMathOperator*{\disc}{disc}

\newtheorem{theorem}{Theorem}
\newtheorem{lemma}[theorem]{Lemma}

\newtheorem{corollary}[theorem]{Corollary}

\theoremstyle{definition}

\begin{document}
\title{Toppling numbers of complete and random graphs}

\author{Anthony Bonato}
\address{Department of Mathematics, Ryerson University, Toronto, ON, Canada, M5B 2K3}
\email{\tt abonato@ryerson.ca}

\author{William B. Kinnersley}
\address{Department of Mathematics, Ryerson University, Toronto, ON, Canada, M5B 2K3}
\email{\tt wkinners@ryerson.ca}

\author{Pawe{\l} Pra{\l}at}
\address{Department of Mathematics, Ryerson University, Toronto, ON, Canada, M5B 2K3}
\email{\tt pralat@ryerson.ca}

\begin{abstract}
We study a two-person game played on graphs based on the widely studied chip-firing game.  Players Max and Min alternately place chips on the vertices of a graph.  When a vertex accumulates as many chips as its degree, it fires, sending one chip to each neighbour; this may in turn cause other vertices to fire.  The game ends when vertices continue firing forever.  Min seeks to minimize the number of chips played during the game, while Max seeks to maximize it.  When both players play optimally, the length of the game is the {\em toppling number} of a graph $G$, and is denoted by $\tg(G)$.

By considering strategies for both players and investigating the evolution of the game with differential equations, we provide asymptotic bounds on the toppling number of the complete graph. In particular, we prove that for sufficiently large $n$ $$0.596400 n^2 < \tg(K_n) < 0.637152 n^2.$$ Using a fractional version of the game, we couple the toppling numbers of complete graphs and the binomial random graph $G(n,p)$. It is shown that for $pn \ge n^{2/\sqrt{\log n}}$ asymptotically almost surely $t(G(n,p))=(1+o(1))p\tg(K_n)$.
\end{abstract}

\maketitle

\section{Introduction}

The game of chip-firing and its variants have been a subject of active investigation in a variety of disciplines, with applications to topics such as Tutte polynomials, spectral graph theory, matroids, and statistical mechanics; see~\cite{merino} for a survey with an extensive bibliography. A version of chip-firing played on paths was first employed in~\cite{spencer}, and was generalized to arbitrary graphs in~\cite{BLS91}. The so-called Abelian Sandpile model in statistical mechanics~\cite{bak} was introduced independently of the chip-firing game. The idea of the game, played on an grid, is that each vertex is associated a value corresponding to the slope of the sandpile at that site. Once the slope reaches a threshold value, the sandpile collapsed, spreading to adjacent vertices. The Abelian Sandpile model is notable as it is a dynamical system that displays self-organized criticality. Similar processes were studied before; see~\cite{AB1,chipping,brushes1} for definitions, \cite{brushes2, brushes3} for results on random graphs, and~\cite{brushes4} for a combinatorial game.

We consider the following game-theoretic synthesis of the chip-firing game and the Abelian Sandpile model played on undirected (finite) graphs. The game we consider was first suggested by Gregory Puleo to a University of Illinois research group in 2011, and was studied by Cranston and West~\cite{CW13}.  We were inspired by Question 2 on \cite{web_page1} on complete graphs.  We first need a few definitions and observations. A {\em configuration} of a graph $G$ is a placement of chips on the vertices $G$.  We represent a configuration by a function $c: V(G) \rightarrow \N \cup \{0\}$, with $c(v)$ indicating the number of chips on vertex $v$. Let $c$ be a configuration of a graph $G$.  We may {\em fire} a vertex $v$ provided that $c(v) \ge \deg(v)$; when $v$ is fired, $\deg(v)$ chips are removed from $v$, and one chip is added to each of its neighbours.  We call a configuration {\em volatile} if there is some infinite sequence $v_1, v_2, \ldots$ of vertices that may be fired in order.  A {\em stable} configuration is one in which no vertex may be fired; that is, $c(v) < \deg(v)$ for all $v \in V(G)$.
Bj\"orner, Lov\'asz, and Shor~\cite{BLS91} proved that for any configuration of any graph, the order of vertex firings does not matter.  More precisely, if $c$ is a volatile configuration, then after any list of vertex firings, the resulting configuration remains volatile; if instead $c$ is not volatile, then any two maximal lists of vertex firings yield the same stable configuration.  Additionally, they showed that every volatile configuration of a graph $G$ has at least $\size{E(G)}$ chips, and that every configuration having at least $2\size{E(G)}-\size{V(G)}+1$ chips is volatile.

The {\em toppling game} (see \cite{web_page1}) is a two-player game played on a graph $G$.  Initially, there are no chips on any vertices.  The players, Max and Min, alternate turns; on each turn, a player adds one chip to a vertex of his or her choosing.  The game ends when a volatile configuration is reached.  Max aims to maximize the number of chips played before this point, while Min aims to minimize it.  When Max starts and both players play optimally, the length of the game on $G$ is the {\em toppling number} of $G$, denoted $\tg(G)$.  A {\em turn} of the game is the placement of a single chip; a {\em round} is a pair of consecutive turns, one by each player.

Formally, after each turn of the game, vertices are fired until a stable configuration arises (unless of course the configuration is volatile, in which case the game is over).  However, to simplify analysis, we occasionally postpone firing vertices until it is convenient, or stop firing before reaching a stable configuration.  As a result, we may end up firing vertices in a different order than if we had always fired immediately; however, by the result of Bj\"orner, Lov\'asz, and Shor this does not affect whether or not the current configuration is volatile.

Throughout, we consider only finite, simple, undirected graphs in the paper. For background on graph theory, the reader is directed to~\cite{west}.

\subsection{Main results}

We now state our main results and defer the proofs to Sections~\ref{complete} and \ref{random}. We first give asymptotic bounds on the toppling number of complete graphs, which rely in part on the analysis of certain systems of differential equations.

\begin{theorem}\label{main1}
For sufficiently large $n$ we have that $$0.596400n^2 < \tg(K_n) < 0.637152 n^2.$$
\end{theorem}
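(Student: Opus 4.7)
Our plan is to prove the two inequalities separately, each time by exhibiting an explicit strategy for the relevant player and tracking the evolution of the chip-distribution profile via a system of ordinary differential equations in a continuous scaling limit.

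For the upper bound $\tg(K_n) < 0.637152\, n^2$, we will design a strategy for Min that forces a volatile configuration after at most $0.637152\, n^2$ turns, regardless of Max's play. A natural candidate is for Min to place each of her chips on a vertex whose current chip count is as close to the firing threshold $n-1$ as possible; this maximizes the number of vertices that are ``on the brink'' of firing, so that a single firing will cascade through the top layer and trigger volatility. To analyse this rigorously we introduce, for each $k$, the quantity $f_k(\tau)$ equal to the fraction of vertices holding exactly $k$ chips after $\tau n^2$ turns; even though Max's moves are adversarial, Min's fixed rule controls the upper tail of the $f_k$'s. We then derive a closed system of ODEs (together with a worst-case assumption on where Max plays) and determine numerically the earliest $\tau^*$ at which the resulting profile must admit an avalanche; this yields the explicit constant $0.637152$.

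For the lower bound $\tg(K_n) > 0.596400\, n^2$, we will give a strategy for Max. Max plays each of his chips on a vertex of minimum current chip count, which prevents any one vertex from pulling far ahead of the bulk and hence delays the first firing. Under this rule we again track the profile $f_k(\tau)$ and derive the corresponding ODE system; the essential point is to show that against any Min strategy the distribution remains flat enough for long enough that firings dissipate harmlessly (the fired vertex is promptly resupplied from below by Max rather than propagating upward). Solving the ODE system and carefully identifying the earliest volatility threshold gives $\tau_* \geq 0.596400$.

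The main technical obstacles are threefold. First, because only one player's strategy is fixed at a time, we must show that each candidate strategy is robust against an \emph{arbitrary} opposing strategy; this calls for a potential-function or monotonicity argument that handles the adversary's best responses rather than just a typical move. Second, determining the exact moment at which a volatile configuration appears is delicate because a single firing may trigger a cascade of further firings; here we will exploit the Bj\"orner--Lov\'asz--Shor invariance to choose a convenient firing order and to certify (or rule out) volatility directly from the chip-count profile. Third, the constants $0.596400$ and $0.637152$ must be extracted from numerical solutions of the ODE systems, and the discretisation gap between the integer-valued game process and the continuous ODE trajectory must be controlled by a Lipschitz-style concentration argument in the spirit of Wormald's differential-equation method; I expect this last step, especially coupling it cleanly with the adversarial analysis in the first point, to be the most demanding part of the proof.
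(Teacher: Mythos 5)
There is a genuine gap here, and it starts with the strategies themselves. The paper's entire analysis rests on a structural reduction that your proposal never finds: a configuration of $K_n$ with all counts below $n-1$ is volatile if and only if the sorted counts dominate the staircase $(n-1,n-2,\ldots,1,0)$ (Theorem~\ref{volatile_complete}), and firings preserve the number of chips inside this ``critical triangle'' (Lemma~\ref{lem_inout}), so the length of the game is exactly $\binom{n}{2}$ plus the number of out-chips played. Once this is in place, Min's goal is to build the staircase with no waste and Max's goal is to force wasted chips; neither goal is about firing thresholds. Your proposed Min strategy --- always top up the vertex nearest the threshold $n-1$ --- is essentially the opposite of what Min should do: it triggers firings that redistribute chips toward a near-uniform profile, and a uniform profile is the \emph{worst} case for volatility (it needs roughly $n-1$ chips per vertex, i.e.\ about $n^2$ chips total), so this strategy plausibly yields only the trivial upper bound. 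The paper's Min instead fills the staircase layer by layer, spreading chips across low-count vertices (the ``triangle strategy''). Similarly, your rationale for Max (``delay the first firing'') targets the wrong quantity, since firings are neutral with respect to the in-/out-chip count; the paper's Max exploits ``accessible rows'' to force out-chips, not to postpone avalanches.

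Two further problems would remain even with the right strategies. First, tracking $f_k(\tau)$, the fraction of vertices with exactly $k$ chips, gives a system of $n$ coupled equations (a PDE in the limit), whereas the paper's sorted-grid picture collapses each ideal game to a two-dimensional system in $(y,z)$, which is what makes the numerics --- and hence the specific constants $0.318576$ and $0.298200$ (doubled to $0.637152$ and $0.596400$) --- obtainable; different strategies would in any case produce different constants. Second, the adversarial robustness you correctly flag as the hard step is resolved in the paper not by a generic potential function but by a phase-by-phase coupling of the ``real game'' against an ``ideal game'' in which both strategies are fixed (triangle vs.\ row for the upper bound, square vs.\ row for the lower bound), showing the real game finishes each phase no later (resp.\ no sooner) than the ideal one. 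Without the critical-triangle framework, that coupling has nothing to latch onto.
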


The proof of Theorem~\ref{main1} is established by proving the upper and lower bounds separately. The main tool is to couple the game with certain auxiliary games (so-called \emph{ideal games}) where we have perfect control of the strategies of both Max and Min. We then derive bounds by simulating the evolution of the ideal games via systems of differential equations (whose numerical solutions yield the constants in Theorem~\ref{main1}).

It may not be evident a priori that the toppling numbers of complete graphs are related to those of random graphs, but our second result shows an intimate connection. For given edge probability $p=p(n)$, we say that a graph property holds {\em asymptotically almost surely} (or {\em a.a.s.}) for $G(n,p)$ if it holds with probability tending to 1 as $n$ tends to $\infty$.

\begin{theorem}\label{main2}
Let $p$ be such that $pn \ge n^{2/\sqrt{\log n}}$, and let $G \in G(n,p)$.  A.a.s.
$$
\tg(G) = (1+o(1))p\tg(K_n).
$$
\end{theorem}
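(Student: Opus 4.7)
The plan is to prove matching upper and lower bounds on $\tg(G)$ by coupling optimal play on $G \in G(n,p)$ with play in a suitably defined fractional version of the toppling game on $K_n$. Because the typical degree in $G$ is $p(n-1)$ rather than $n-1$, the firing threshold in the fractional game should likewise be scaled by $p$, and this scaling alone accounts for the factor of $p$ in the conclusion.

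First I would introduce the \emph{fractional} toppling game on $K_n$, obtained by replacing the firing threshold of each vertex by $p(n-1)$ (equivalently, by rescaling each chip to have weight $1/p$). The ideal-game arguments from Section~\ref{complete} are essentially invariant under this rescaling; repeating that differential-equation analysis with the new threshold yields a fractional toppling number of $(1+o(1))\,p\,\tg(K_n)$.

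Next, to establish $\tg(G) \geq (1-o(1))\,p\,\tg(K_n)$, Max in $G$ should mimic his optimal strategy in the fractional game on $K_n$: whenever that strategy calls for a chip on vertex $v$, Max plays $v$ in $G$. Using Chernoff bounds, a.a.s.\ every vertex of $G$ has degree $(1+o(1))pn$, and more generally every sufficiently large vertex subset $S \subseteq V(G)$ spans $(1+o(1))\,p\,|S|(n-|S|)$ crossing edges. These estimates allow one to track each vertex's chip count in $G$ by its counterpart in the fractional game on $K_n$ up to a multiplicative error of $1+o(1)$, so a configuration that is non-volatile in the fractional game translates to a non-volatile configuration in $G$. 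The upper bound $\tg(G) \leq (1+o(1))\,p\,\tg(K_n)$ is obtained symmetrically, by coupling Min's optimal fractional strategy on $K_n$ with her play on $G$.

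The main obstacle is making the coupling quantitative enough that the cumulative errors from the random-graph approximations remain $o(p\,\tg(K_n))$ across $\Theta(pn^2)$ turns, and across the many intermediate configurations reachable during the game. One must isolate a manageable family of random-graph events (degrees, edge counts between linearly-sized subsets, perhaps codegrees) whose concentration suffices to control \emph{every} reachable configuration, and then apply a union bound over them. The density hypothesis $pn \geq n^{2/\sqrt{\log n}}$ is exactly what such a Chernoff-plus-union-bound calculation appears to demand, since any slower growth in $pn$ would make the relative fluctuations in degrees and edge counts too large to control the firing threshold at the precision required.
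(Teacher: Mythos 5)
Your overall architecture---using a fractional toppling game on $K_n$ as an intermediary and coupling it to the real game on $G(n,p)$ via imitation strategies---matches the paper's. But there are two genuine gaps. First, you claim that the fractional toppling number of $K_n$ equals $(1+o(1))p\tg(K_n)$ because ``the ideal-game arguments from Section~\ref{complete} are essentially invariant under rescaling.'' Repeating the differential-equation analysis with the scaled threshold only reproduces the \emph{non-matching} upper and lower bounds $0.5964\,pn^2$ and $0.6372\,pn^2$; since $\tg(K_n)/n^2$ is not known to converge, this does not yield $\tg_p(K_n)=(1+o(1))p\tg(K_n)$. The paper instead proves a general statement (Theorem~\ref{fractional}): $\tg_p(G)=p\tg(G)+O(n)$ for \emph{every} graph, by having a player imagine an optimal ordinary game, simulate roughly $1/p$ ordinary rounds per fractional round, and keep a per-vertex discrepancy $\sum_v |y_t(v)-px_t(v)|$ bounded by $O(n)$. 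Some argument of this kind is indispensable and is missing from your proposal.

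Second, your list of random-graph events (degree concentration, crossing-edge counts between large sets) does not capture the real difficulty in the coupling, which is the terminal firing cascade. When one game reaches a volatile configuration, you must show the other can be made volatile with only $o(pn^2)$ extra chips, and to do this you must fire a length-$n$ sequence in which vertices can repeat; a vertex of $G$ that fires many more times than its counterpart on $K_n$ wrecks any $(1+o(1))$ tracking of chip counts. The paper handles this with Lemma~\ref{firing_limit}, which bounds each vertex's firing multiplicity by $O(pn/\log^2 n)$ via an expansion argument through BFS neighbourhoods $N(v,1),\dots,N(v,m+1)$, combined with Lemmas~\ref{fractional_restricted} and~\ref{random_restricted} that first cap the number of chips either player may stack on one vertex (without which the multiplicity bound has no starting point). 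Relatedly, your heuristic for the density threshold is off: degree and edge-count concentration already hold for $pn\gg\log n$, and the paper notes explicitly that the strong hypothesis $pn\ge n^{2/\sqrt{\log n}}$ is needed \emph{only} for the firing-multiplicity lemma, whose layered product of codegree bounds $\bigl(1.5\sqrt{\log n}/(m-j)\bigr)$ is what forces $m\le 0.5\sqrt{\log n}$. One helpful simplification the paper uses, which your worry about ``every reachable configuration'' overlooks, is that all firing can be postponed to the end of the game, so only one cascade ever needs to be controlled.
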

Our approach to prove Theorem~\ref{main2} is to consider a fractional version of the toppling game, and to then make precise the connection between the games (see Theorem~\ref{fractional}). Analysis of the fractional toppling game on $G(n,p)$ uses structural and expansion properties of the random graph (see Lemmas~\ref{lem:expansion} and~\ref{lem:back_edges}).

\section{Complete Graphs}\label{complete}

In this section we establish upper and lower bounds for the toppling number of the complete graph $K_n$ and so prove Theorem~\ref{main1}.  Before proceeding, we establish tools for recognizing volatile configurations (applicable to all graphs).

\begin{lemma}\label{all_vxs_fire}
Let $c$ be a configuration of a graph $G$.  If $c$ admits some list of firings such that every vertex of $G$ fires at least once, then $c$ is volatile.
\end{lemma}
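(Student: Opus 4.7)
The plan is to argue by contradiction, combining the Bj\"orner--Lov\'asz--Shor (BLS) abelian property quoted in the introduction with the key algebraic observation that ``firing every vertex of $G$ exactly once'' is a no-op at the level of configurations: each vertex $v$ loses $\deg(v)$ chips from its own firing and receives exactly $\deg(v)$ chips back (one from each neighbour). Equivalently, the all-ones vector $\mathbf{1}$ lies in the kernel of the graph Laplacian $L$, so firing according to any odometer $u$ and firing according to $u - \mathbf{1}$ produce the same resulting configuration.

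Suppose, for contradiction, that $c$ is not volatile while the given legal firing sequence $\sigma$ fires every vertex at least once. By BLS, $\sigma$ extends to a maximal legal firing sequence terminating in the unique stable configuration $c^{*}$, and every maximal extension fires each vertex the same number of times. Let $u^{*} \colon V(G) \to \N$ denote this common odometer. Since $\sigma$ is a prefix of such an extension and already fires each vertex at least once, we have $u^{*}(v) \ge u_\sigma(v) \ge 1$ for every $v$. Now set $u' := u^{*} - \mathbf{1}$, a nonnegative integer vector strictly smaller than $u^{*}$ at every coordinate. By the no-op observation, the hypothetical configuration obtained by firing vertex $v$ exactly $u'(v)$ times coincides with the one obtained by firing $u^{*}(v)$ times, namely the stable configuration $c^{*}$.

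To finish, I invoke the \emph{least action principle} for chip-firing: whenever $c$ is not volatile, $u^{*}$ is the pointwise minimum over all nonnegative integer vectors $u$ for which the hypothetical ``fire $v$ exactly $u(v)$ times'' configuration is nonnegative and stable. Applied to $u'$, this yields $u' \ge u^{*}$, contradicting $u' < u^{*}$ coordinate-wise; hence $c$ must be volatile. The main obstacle is justifying the least action principle, which is standard in the chip-firing literature but not stated explicitly in the excerpt. I would either cite an appropriate reference or supply a short self-contained derivation via a greedy ``fire up to the quota $u$'' procedure starting from $c$: this either realizes $u$ as the odometer of a legal firing sequence (producing a maximal legal sequence with odometer strictly smaller than $u^{*}$, contradicting the BLS uniqueness of $u^{*}$) or stalls in a configuration whose structure, combined with the stability and nonnegativity of the hypothetical firing-$u$ configuration, forces $u \ge u^{*}$ pointwise.
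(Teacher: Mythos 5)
Your proof is correct, but it takes a genuinely different and heavier route than the paper's. The paper argues directly: since $c$ is assumed non-volatile, the given sequence extends to a finite maximal legal sequence ending in a stable configuration $\hat c$; ordering the vertices by their \emph{last} appearance in that sequence, the first vertex in this order receives a chip from each of its neighbours after its own final firing, so $\hat c(v_1) \ge \deg(v_1)$, contradicting stability. That uses only the part of Bj\"orner--Lov\'asz--Shor already quoted in the paper. Your route works at the level of odometers: $\mathbf{1}$ lies in the kernel of the Laplacian, the stabilizing odometer $u^*$ is $\ge \mathbf{1}$ everywhere, and $u^* - \mathbf{1}$ also stabilizes $c$, contradicting the least action principle. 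This is sound, and the ``firing everyone once is a no-op'' observation is a genuine structural insight, but it forces you to import two facts the paper does not supply: the odometer-uniqueness half of the abelian property (the paper quotes only uniqueness of the resulting stable configuration), and the least action principle itself. You correctly flag the latter; note that its proof is short and you should replace your vague ``stalls'' branch with the standard first-violation count --- consider the first moment a legal sequence fires some $v$ for the $(u(v)+1)$-st time, and count chips at $v$ to conclude it has at most $(c - Lu)(v) < \deg(v)$ chips, so the firing is illegal. Net: both proofs work; the paper's buys brevity and self-containment, while yours buys a reusable algebraic statement (no stabilizing odometer of a non-volatile configuration dominates $\mathbf{1}$) at the cost of extra machinery.
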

\begin{proof}
To reach a contradiction, suppose that $c$ is not volatile.  By the result of Bj\"orner, Lov\'asz, and Shor~\cite{BLS91}, every maximal list of firings yields the same configuration, so some list of firings both contains all vertices of $G$ and produces a stable configuration $\hat c$.  Index the vertices of $G$ as $v_1, v_2, \ldots, v_n$, in order of their final appearance in this firing list.  Following its last firing, vertex $v_1$ received at least one chip from each of its neighbours; hence $\hat c(v_1) \ge \deg(v_1)$, contradicting stability of $\hat c$.
\end{proof}

In some circumstances it is convenient to compare a given configuration with one that is known to be volatile. For configurations $c_1$ and $c_2$ of a graph $G$, we say that $c_1$ {\em dominates} $c_2$ provided that $c_1(v) \ge c_2(v)$ for all vertices $v$.

\begin{lemma}\label{volatile_domination}
Let $c_1$ and $c_2$ be configurations of a graph $G$ with $c_1$ dominating $c_2$.  If $c_2$ is volatile, then so is $c_1$.
\end{lemma}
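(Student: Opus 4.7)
My plan is to prove the lemma directly by exhibiting an infinite firing sequence for $c_1$, obtained by mimicking a known infinite firing sequence for $c_2$. The key observation is that firing a vertex $v$ is a deterministic, configuration-independent operation: it subtracts $\deg(v)$ from $c(v)$ and adds $1$ to $c(u)$ for every neighbour $u$. Hence, if two configurations differ by a non-negative function and we apply the same firing to each, the resulting configurations still differ by the same non-negative function. In particular, pointwise domination is preserved under any common list of firings.

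Concretely, since $c_2$ is volatile, by definition there exists an infinite sequence $v_1, v_2, \ldots$ of vertices that can be fired in order starting from $c_2$. Let $c_2^{(0)} = c_2$ and let $c_2^{(i)}$ denote the configuration obtained from $c_2$ after firing $v_1, \ldots, v_i$; similarly, define $c_1^{(0)} = c_1$ and let $c_1^{(i)}$ be the result of performing the same firings starting from $c_1$. I would then proceed by induction on $i$, using the hypothesis $c_1 \ge c_2$ as the base case, and the observation above to show that $c_1^{(i)} \ge c_2^{(i)}$ for all $i$.

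To complete the inductive step and ensure the firings are legal, I note that at stage $i$ the configuration $c_2^{(i-1)}$ permits firing $v_i$, so $c_2^{(i-1)}(v_i) \ge \deg(v_i)$; combined with $c_1^{(i-1)}(v_i) \ge c_2^{(i-1)}(v_i)$ from the induction hypothesis, this gives $c_1^{(i-1)}(v_i) \ge \deg(v_i)$, so $v_i$ may legally be fired from $c_1^{(i-1)}$ as well. Therefore the sequence $v_1, v_2, \ldots$ is a valid infinite firing sequence starting from $c_1$, which shows that $c_1$ is volatile.

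I do not anticipate any real obstacle: the argument is a one-line monotonicity observation packaged as a short induction, and it uses nothing beyond the definitions of firing and volatility. The only subtlety worth stating clearly is that the firing map is the same affine operation on configurations regardless of the initial configuration, which is what lets domination be preserved step by step.
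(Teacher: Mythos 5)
Your proposal is correct and is essentially the paper's own argument, spelled out in more detail: the paper also takes a legal firing under $c_2$, fires the same vertex under $c_1$, observes that domination is preserved, and repeats indefinitely. Your explicit induction and the remark that firing is the same affine operation on any configuration are just a more careful packaging of the same idea.
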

\begin{proof}
Let $v$ be a vertex of $G$ for which $c_2(v) \ge \deg(v)$, that is, $v$ may be fired under $c_2$ (and hence also under $c_1$).  Firing $v$ under $c_1$ and under $c_2$ yields new configurations $c'_1$ and $c'_2$, respectively, such that $c'_1$ dominates $c'_2$.  We may repeat this process indefinitely.
\end{proof}

Note also that if $c_1$ is not volatile and $c_1$ dominates $c_2$, then $c_2$ also is not volatile. Next, we have a useful characterization of volatile configurations of $K_n$.

\begin{theorem}\label{volatile_complete}
A configuration $c$ of $K_n$ with $c(v) \le n-1$ for all vertices $v$ is volatile if and only if there is an ordering $v_1, v_2, \ldots, v_n$ of $V(K_n)$ such that $c(v_i) \ge n-i$ for all $i \in [n]$.
\end{theorem}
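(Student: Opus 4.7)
The plan is to prove the two implications separately, exploiting the fact that every vertex of $K_n$ has the same degree $n-1$, which makes the bookkeeping after firings very clean.

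For the backward direction, suppose an ordering $v_1, v_2, \ldots, v_n$ with $c(v_i) \ge n-i$ exists. I would define the ``minimal'' configuration $c'$ by $c'(v_i) = n - i$; then $c$ dominates $c'$, so by Lemma~\ref{volatile_domination} it suffices to show that $c'$ is volatile. I would verify volatility directly by firing $v_1, v_2, \ldots, v_n$ in order. A straightforward calculation shows that after firing $v_1, \ldots, v_k$, vertex $v_i$ holds $k - i$ chips if $i \le k$ and $n - i + k$ chips if $i > k$; in particular, $v_{k+1}$ holds exactly $n - 1$ chips, so each firing in the sequence is legal, and after all $n$ firings we recover $c'$ itself. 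Iterating this sequence indefinitely witnesses volatility.

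For the forward direction, suppose $c$ is volatile with $c(v) \le n - 1$ for every $v$. I would build the ordering greedily by induction. Let $v_1$ be any vertex fireable in $c_0 := c$, which exists since $c$ is volatile; the cap forces $c(v_1) = n - 1$. Given $v_1, \ldots, v_i$ already constructed so that $c(v_j) \ge n - j$ for $j \le i$ and so that firing them in order is legal, let $c_i$ be the resulting configuration. By the Bj\"orner-Lov\'asz-Shor result, $c_i$ is again volatile, so some vertex is fireable in it; set $v_{i+1}$ to be such a vertex.

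The main obstacle, and the place where the cap hypothesis is crucial, is to check that $v_{i+1}$ is distinct from $v_1, \ldots, v_i$, so that the induction produces $n$ distinct vertices with the required lower bounds on their $c$-values. For $j \le i$, a direct computation in $K_n$ gives $c_i(v_j) = c(v_j) + (i - 1) - (n - 1)$, and the cap $c(v_j) \le n - 1$ then yields $c_i(v_j) \le i - 1$, which is strictly less than $n - 1$ whenever $i \le n - 1$. Hence no previously fired vertex is fireable in $c_i$, so $v_{i+1} \notin \{v_1, \ldots, v_i\}$. For such a new vertex, $c_i(v_{i+1}) = c(v_{i+1}) + i$, and the firing condition $c_i(v_{i+1}) \ge n - 1$ translates to $c(v_{i+1}) \ge n - (i+1)$, completing the inductive step. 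After $n$ iterations we obtain the desired ordering.
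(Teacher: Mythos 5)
Your proof is correct and follows essentially the same route as the paper's: fire $v_1,\dots,v_n$ in order for sufficiency, and for necessity use the cap $c(v)\le n-1$ to show that the first $n$ firings of a volatile configuration must involve distinct vertices, reading off $c(v_i)\ge n-i$ from the firing threshold. The only cosmetic differences are that you certify volatility via the periodicity of the minimal configuration together with Lemma~\ref{volatile_domination} rather than via Lemma~\ref{all_vxs_fire}, and that you build the firing order greedily instead of taking an arbitrary length-$n$ firing list and proving its entries distinct by contradiction.
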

\begin{proof}
Let $c$ be such a configuration and $v_1, v_2, \ldots, v_n$ such an ordering.  We claim that all $n$ vertices may be fired in order, from which volatility follows by Lemma~\ref{all_vxs_fire}.  Since $c(v_1) \ge n-1$, we may fire $v_1$ immediately.  Once $v_1, v_2, \ldots, v_k$ have been fired for some $k < n$, vertex $v_{k+1}$ has received $k$ additional chips.  Since it had at least $n-k-1$ initially, it now has at least $n-1$, and may be fired.  Repeating as necessary establishes the claim.

For the converse, let $c$ be a volatile configuration satisfying $c(v) \le n-1$ for all vertices $v$.  Let $u_1, u_2, \ldots, u_n$ be a legal firing list under $c$.  We claim that the $u_i$ must be distinct.  Suppose to the contrary that some vertex $v$ appears twice in the sequence: in particular, suppose $u_i = u_j = v$, with $i < j$.  Just before $v$ is fired for the second time, it has received at most $n-2$ chips from earlier firings (since $v$ receives no chip from its first firing).  Moreover, $v$ lost $n-1$ chips on its first firing.  Thus, $v$ has fewer chips at the time of its second firing than it had initially, contradicting $c(v) \le n-1$.  It follows that the $u_i$ are distinct.  For $1 \le i \le n$, vertex $u_i$ can be fired after receiving $i-1$ chips from earlier firings; hence $c(u_i) + i-1 \ge n-1$, which simplifies to $c(u_i) \ge n-i$.  Thus, $u_1, u_2, \ldots, u_n$ is the desired ordering of $V(K_n)$.
\end{proof}

The following result due to Cranston and West~\cite{CW13} will prove useful.  Let the {\em Min-start game} be the variant of the toppling game in which Min, not Max, plays first.
\begin{theorem}[\cite{CW13}]\label{difference}
On any graph, if both players play optimally, then the lengths of the toppling game and Min-start game differ by at most 1.
\end{theorem}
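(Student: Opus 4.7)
The plan is to prove the two inequalities $\tg(G) \le t' + 1$ and $t' \le \tg(G) + 1$ separately, where $t'$ denotes the length of the Min-start game under optimal play. Both bounds follow from a strategy-stealing argument combined with the domination principle of Lemma~\ref{volatile_domination}, which ensures that extra chips can only hasten volatility.

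For the first inequality, I would fix a strategy $\sigma$ for Min that is optimal in the Min-start game, so that $\sigma$ guarantees a game length of at most $t'$ against any Max play. In the Max-start game, Min's prescribed strategy is to ignore Max's opening chip and play $\sigma$ as though starting a fresh Min-start game from the empty configuration. This real game is then coupled with a ``shadow'' Min-start game played in parallel: Min follows $\sigma$ in both, and Max's shadow moves are declared to be Max's subsequent real moves. By construction, the real configuration dominates the shadow configuration throughout (it differs only by the one chip from Max's opening move), so Lemma~\ref{volatile_domination} guarantees that the real configuration becomes volatile no later than the shadow one. Since $\sigma$ bounds the shadow game by at most $t'$ turns and the real game is exactly one turn ahead of the shadow, it ends within $t'+1$ turns. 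The reverse inequality is proved symmetrically: Min's first move in the Min-start game is an arbitrary ``wasted'' chip, after which Min copies her optimal Max-start strategy, with each real Max move playing the role of the corresponding Max move in a shadow Max-start game; domination again forces the real game to end within $\tg(G)+1$ turns.

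The main conceptual point to verify is that the strategy $\sigma$ stolen from one game remains effective when the shadow opponent is forced to play exactly the real opponent's moves, which need not be optimal in the shadow. Fortunately, optimality of Min's strategy means $\sigma$ guarantees its length bound against \emph{every} sequence of Max responses, not merely the shadow-optimal ones, so the shadow length bound transfers immediately to the real game. Combining the two inequalities yields $\size{\tg(G) - t'} \le 1$.
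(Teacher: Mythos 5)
Your argument is correct. Note that the paper itself gives no proof of this statement---it is quoted from Cranston and West~\cite{CW13} as a black box---so there is no in-paper argument to compare against; what you have written is the standard strategy-stealing proof that one would expect, and it works. The two directions are handled symmetrically: in each case the player going ``out of turn'' (Max's opening chip in one direction, Min's deliberately wasted opening chip in the other) contributes one extra chip, after which the real game and the shadow game have the same alternation pattern with the real game one turn ahead, and optimality of the stolen strategy against \emph{all} opposing play (which you correctly identify as the crux) bounds the shadow game's length. One small point deserves care: the claim that ``the real configuration dominates the shadow configuration throughout'' is literally true for the \emph{unfired} configurations (the multisets of chips placed), but can fail pointwise for the stabilized configurations, since the extra chip may trigger firings that redistribute chips. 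This does not break the proof: by the Bj\"orner--Lov\'asz--Shor result (and the paper's explicit convention of postponing firings), volatility of the game state is equivalent to volatility of the unfired configuration, so it suffices to apply Lemma~\ref{volatile_domination} to the unfired configurations at the moment the shadow game ends. With that reading, the inequalities $\tg(G) \le t'+1$ and $t' \le \tg(G)+1$ both follow as you describe.
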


When playing the toppling game on the complete graph, it simplifies analysis to ``sort'' the vertices by the numbers of chips they contain, in nonincreasing order.  At all times we let $v_1$ denote the vertex with the most chips, $v_2$ the vertex with the next most, and so on.  We re-index vertices in this way after every turn of the game.  By the symmetry of the complete graph, we may suppose without loss of generality that, whenever possible, moves are played so that no re-indexing is necessary.  For example, if $v_i$ and $v_{i+1}$ contain equal numbers of chips, then placing a chip on $v_{i+1}$ would force a re-indexing of the vertices: the old $v_i$ becomes the new $v_{i+1}$, and the old $v_{i+1}$ becomes the new $v_i$.  However, we may just as well suppose that the chip was placed directly on $v_i$ to begin with, since this produces an equivalent configuration.  In general, the only moves that force re-indexing are those moves that cause one or more vertices to fire.
\begin{figure}[b!]
\begin{center}
\begin{tabular}{ccc}
\includegraphics[width=1.8in]{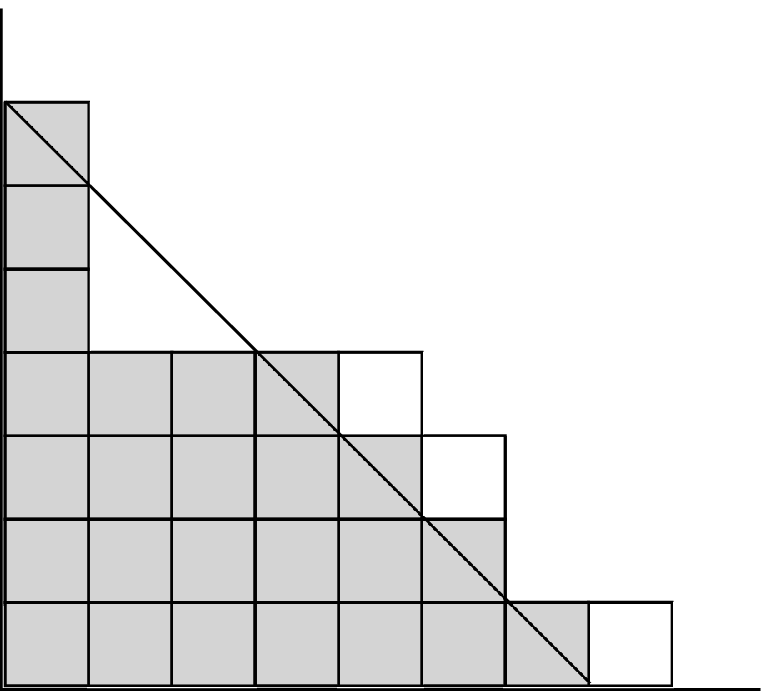} & \hspace{1in} & \includegraphics[width=1.8in]{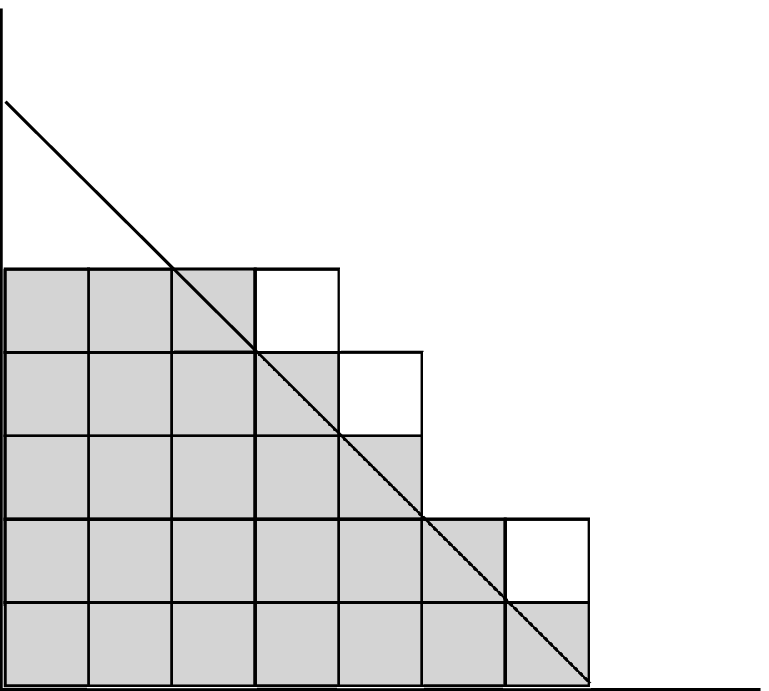}\\
(a) the critical triangle & & (b) after firing \\
\quad\,\,\,\,\, (in-chips are shaded) & &
\end{tabular}
\end{center}
\caption{Firing a single vertex.}\label{fig:triangle}
\end{figure}
It further simplifies matters to represent configurations of $K_n$ graphically, as follows.  Consider a rectangular grid with $n$ columns, with some cells filled and others empty.  Each column represents a vertex, and the number of filled cells in that column indicates the number of chips on that vertex.  A move in the toppling game corresponds to filling a cell in the grid; cells are filled from bottom to top and (in accordance with vertex ``sorting'') from left to right.  For example, cell $(5,8)$ cannot be filled until cells $(4,8)$ and $(5,7)$ have both been filled.  When cell $(1,n-1)$ is filled, vertex $v_1$ fires, and sends one chip to each other vertex in the graph, after which vertices are re-indexed.

By Corollary~\ref{volatile_complete}, the game ends once we reach a configuration $c$ with $c(v_i) \ge n-i$ for all $i \in [n]$.  The set of cells that must be filled to produce a volatile configuration roughly corresponds to a triangle in the grid, which we refer to as the {\em critical triangle}.  We refer to chips placed within the critical triangle as {\em in-chips} and those placed outside as {\em out-chips}.  Since the game ends once $\binom{n}{2}$ in-chips have been played, the length of the game is completely determined by the number of out-chips played.  Thus, Max aims to force many out-chips to be played, while Min aims to prevent this.

In the proof of Theorem~\ref{main1}, we use the observation that firing vertices does not change the numbers of in-chips and out-chips.  When only a single vertex is fired, this is easy to see: firing $v_1$ has the effect of first emptying all cells in the first column, next shifting all filled cells up one unit and left one unit, and finally filling cells $(1,1), (2,1), \ldots, (n-1,1)$---see Figure~\ref{fig:triangle}.  However, when this first firing itself induces more firings, the situation is more complicated.

\begin{lemma}\label{lem_inout}
Let $c$ be a configuration of $K_n$ such that $c(v_1) = n-1$ and $c(v_i) \le n-2$ for $i \ge 2$.  If $c$ is not volatile, then the stable configuration corresponding to $c$ has the same number of in-chips as $c$.
\end{lemma}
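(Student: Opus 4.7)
The plan is to show that every single firing that occurs during the cascade preserves the number of in-chips. Since a firing preserves the total chip count (a vertex loses $n-1$ chips and distributes $n-1$ chips to its neighbours), preservation of in-chips is equivalent to preservation of out-chips, and iterating over the finite cascade delivers the lemma. The key identity I would use is that the number of in-chips in any configuration equals $\sum_{j=1}^{n-1} \min(r_j, n-j)$, where $r_j$ denotes the number of vertices with at least $j$ chips; this follows by counting the filled cells of the critical triangle row by row.

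I would then analyse a single firing. If the fired vertex $v^*$ has $c(v^*) = n-1+m$ for some $m \ge 0$, then its count drops to $m$ while every other vertex gains one chip, and a direct computation gives $r'_j = r_{j-1} - 1 + \mathbf{1}[j \le m]$ for $j \in \{1, \ldots, n-1\}$. Summing $\min(r'_j, n-j) - \min(r_j, n-j)$, the change in in-chips telescopes to
\[
\Delta = \left| \left\{ k \in \{1, \ldots, m-1\} : r_k < n - k \right\} \right|.
\]
For $m \in \{0, 1\}$ the index set is empty and $\Delta = 0$; in particular the very first firing, which has $m = 0$ by hypothesis, preserves in-chips.

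To handle firings with $m \ge 2$, I would establish two facts. First, no vertex fires more than once during the cascade: the hypothesis $c(v) \le n-1$ for every $v$ implies that a second firing of $v$ at step $s$ would require $c_0(v) + s - 1 - n \ge n - 1$, hence $s \ge 2n - c_0(v) \ge n+1$; but non-volatility combined with Lemma~\ref{all_vxs_fire} guarantees a non-firing vertex $w$, whose stability in the final configuration forces $c_0(w) + t \le n-2$ and hence the bound $t \le n-2$ on the total number of firings, a contradiction. A consequence is that at firing step $t'$ the fired vertex has $c = c_0 + (t'-1) \le n-1 + (t'-1)$, so $m \le t' - 1$. Second, setting $Z_k = n - r_k$, the firing recursion above rewrites as $Z_k(t+1) = \mathbf{1}[m_{t+1} \le k-1] + Z_{k-1}(t)$, and since $Z_0 \equiv 0$ an easy induction on $t$ gives $Z_k(t) \le k$ whenever $k \le t$. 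Combining these, at step $t'$ every $k \le m-1 \le t'-2$ satisfies $Z_k(t'-1) \le k$, i.e.\ $r_k \ge n-k$, so $\Delta = 0$ at that firing as well.

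The main obstacle is the ``no re-firing'' claim, which is where both the initial constraint $c(v) \le n-1$ and non-volatility enter essentially; it yields both the bound $m \le t'-1$ and is precisely what makes the induction on $Z_k$ tight enough. Once that bound is in hand, the clean formula for $\Delta$ together with the inductive bound on $Z_k$ mechanically delivers the preservation of in-chips at every firing in the cascade.
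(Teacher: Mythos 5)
Your proposal is correct, but it proves the lemma by a genuinely different route than the paper. The paper fixes the firing order (always fire the current $v_1$) and runs an induction on the number of firings with three invariants --- a cap of $n-2+i$ chips per vertex, the condition that $v_1$ exceeds $v_n$ by at most $n$, and in-chip preservation --- where the point of the middle invariant is that after each firing the old $v_1$ re-indexes to the new $v_n$, so the firing acts on the grid as ``empty column $1$, shift up-and-left, fill the bottom row, fill column $n$,'' each step of which visibly preserves the in-chip count. You instead work with the sorted profile $r_j$ and the identity (in-chips) $=\sum_{j=1}^{n-1}\min(r_j,n-j)$, compute the exact change under an \emph{arbitrary} single firing, and kill the correction term $\Delta=\size{\{k\le m-1: r_k<n-k\}}$ by combining the ``no vertex fires twice'' fact (correctly extracted from non-volatility via Lemma~\ref{all_vxs_fire}, which bounds the cascade length by $n-2$ and hence gives $m\le t'-1$ at the $t'$-th firing) with the inductive bound $Z_k(t)\le k$ for $k\le t$. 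I verified the telescoping: using $r_{n-1}\ge 1$ at the moment of firing (true, since the fired vertex holds at least $n-1$ chips), your formula for $\Delta$ is exact for $m\ge 1$, and the $m=0$ case gives $\Delta=0$ directly, so the slight non-uniformity between the two cases is cosmetic. What your approach buys is independence from the re-indexing bookkeeping and from the choice of fired vertex, plus a quantitative statement of exactly when a firing can create in-chips, which makes the roles of both hypotheses ($c(v_1)=n-1$ and non-volatility) transparent; what the paper's approach buys is brevity and a picture that matches the grid formalism used throughout the section. Both arguments are valid.
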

\begin{proof}
Suppose $c$ is not volatile, and let $k$ be the number of firings needed to produce a stable configuration.  After each firing, we re-index vertices as needed.  By the result of Bj\"orner, Lov\'asz, and Shor~\cite{BLS91} we may suppose that we only ever fire $v_1$.  For $i \ge 1$, we claim that after $i$ firings, the following three properties hold:
\begin{enumerate}
\item each vertex has at most $n-2+i$ chips,
\item $v_1$ has at most $n$ chips more than $v_n$, and
\item the current configuration has the same number of in-chips as $c$.
\end{enumerate}
When $i=k$, property (3) is precisely the desired claim.  We use induction on $i$.  When $i=1$, property (1) is clear, (2) follows immediately from (1), and (3) follows from the observation preceding the proof.  Suppose now that all three properties hold after $i$ firings, and that $v_1$ is ready to fire.  When we fire $v_1$, each vertex gains at most one chip, so each vertex has at most $n-2+i+1$ chips; hence (1) still holds.  Moreover, $v_n$ gains one chip, while $v_1$ loses $n-1$; hence, property (2) ensures that, after re-indexing, the old $v_1$ becomes the new $v_n$ (and the old $v_2$ becomes the new $v_1$).  Since the old $v_2$ had at most as many chips as the old $v_1$, and the former gained one chip while the latter lost $n-1$, property (2) still holds.  For property (3), suppose that, before the firing, $v_1$ had $n-1+j$ chips.  Since the old $v_1$ becomes the new $v_n$, the graphical effect of the firing is easily described: all cells in the first column are emptied, all remaining cells are shifted up one unit and left one unit, cells $(1,1), (2,1), \ldots, (n-1,1)$ are filled, and cells $(n,1), (n,2), \ldots, (n,j)$ are filled.  Emptying the first column reduces the number of in-chips by $n-1$, shifting cells up and to the left does not change the number of in-chips, filling cells in the first row increases the number of in-chips by $n-1$, and filling cells in the last column again does not change the number of in-chips.  Hence property (3) still holds.
\end{proof}

Before continuing, we outline several possible strategies for Max and Min.
\begin{enumerate}
\item {\bf The ``row strategy'' for Max}: on each turn, Max fills the leftmost empty cell in the bottommost incomplete row.  This has the effect of filling the rows of the grid from left to right, bottom to top.
\item {\bf The ``triangle strategy'' for Min}: Min divides the grid into several ``layers'': layer $i$ consists of those cells whose coordinates sum to $i+1$.  (Note that layers 1 through $n-1$ together comprise the critical triangle.)  On each turn, Min fills the rightmost empty cell in the least-indexed incomplete layer.  This has the effect of filling the layers of the critical triangle in order, from right to left.
\item {\bf The ``square strategy'' for Min}: this strategy is used only as a response to the row strategy for Max.  Let row $k$ be the bottommost incomplete row.  For a nonnegative integer $i$, let $$\quad \quad S_i = \{(1,k+1),(2,k+1),\ldots,(i,k+1),(1,k+2),\ldots,(i,k+2),\ldots,(i,k+i)\};$$ graphically, $S_i$ is the set of cells in the square of side length $i$ with lower-left corner $(1,k+1)$.  Let $s$ be the maximum integer such that all in-chips within $S_s$ have been played; we refer to $S_s$ as ``the square''.  On each turn, Min aims to expand the square by adding another in-chip to $S_{s+1}$.  (When Max completes a row, $k$ increases and the square shifts upward; when this happens, the square temporarily shrinks, until Min can refill the top row.)
\end{enumerate}

We use differential equations to analyze these strategies.  Consider the system of differential equations
\begin{equation}\label{eq:system_x+}
\begin{cases}
y'(x) = \frac {1}{1-z(x)}\\
z'(x) = \frac {1}{z(x)} - \frac {1}{1-z(x)},
\end{cases}
\end{equation}
with initial conditions $y(0)=z(0)=0$. This system arises naturally from the process analyzed in the proof of the next lemma. In particular, it will follow that $y(x)+z(x)$ is an increasing function of $x$. Let $x_+$ denote the value of $x$ for which $y(x)+z(x)=1$. No (simple) closed formula is known for $x_+$; however, we would like to point out that it should be possible to represent it using the \emph{Lambert} $W(x)$ function, which is defined by appropriate solution of $ye^y=x$ and is sometimes called a ``splendid closed formula''~\cite{AMS}. On the other hand, it is straightforward to solve the system of differential equations numerically and derive the following upper bound:
\begin{equation*}\label{eq:x+}
x_+ < 0.318576.
\end{equation*}
We also, independently, verified this numerical value by performing a simulation for large values of $n$. Both the Maple worksheet and computer program used can be downloaded from~\cite{web_page}.

Similarly, for the system of equations
\begin{equation}\label{eq:system1_x-}
\begin{cases}
y_1'(x) = \frac {1}{1-z_1(x)}\\
z_1'(x) = \frac {1}{2z_1(x)} - \frac {1}{2(1-z_1(x))},
\end{cases}
\end{equation}
with initial conditions $y_1(0)=z_1(0)=0$, we define $\bar x$ to be the value of $x$ for which $y_1(x)+2z_1(x)=1$. Numerical solution shows that $\bar x \approx 0.204309$. Finally, consider the system
\begin{equation}\label{eq:system2_x-}
\begin{cases}
y_2'(x) = \frac {1}{1-z_2(x)}\\
z_2'(x) = \frac {1}{2(1-y_2(x)-z_2(x))} - \frac {1}{2(1-z_2(x))},
\end{cases}
\end{equation}
with initial conditions $y_2(\bar x)=y_1(\bar x)$ and $z_2(\bar x)=z_1(\bar x)$. Let $x_-$ denote the value of $x$ for which $y_2(x)+z_2(x)=1$. As before, no closed formula is known for $x_-$, but we have the following bound:
\begin{equation*}\label{eq:x-}
x_- > 0.298200.
\end{equation*}

Now we are ready to analyze the processes that arise under certain choices of strategy.

\begin{lemma}\label{lem:strategies}
Consider the game on $K_n$.
\begin{itemize}
\item [(i)] If Min uses the triangle strategy and Max uses the row strategy, then the game lasts for $(1+o(1)) x_+ n^2$ rounds.
\item [(ii)] If Min uses the square strategy and Max uses the row strategy, then the game lasts for $(1+o(1)) x_- n^2$ rounds.
\end{itemize}
\end{lemma}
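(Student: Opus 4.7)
The plan is to apply the differential equations method in its deterministic form. For each scenario I introduce a small number of scaled state variables $y(x), z(x)$ with rescaled time $x = t/n^2$, derive per-round difference equations for their increments directly from the strategies' definitions, and then compare with the given ODE systems. Because both strategies are completely deterministic, no concentration inequalities are required; it suffices to show that accumulated errors over the $\Theta(n^2)$ rounds of play are $o(n^2)$.

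For part (i), I would take $Y_t$ to be the number of rows entirely filled by Max after $t$ rounds and $Z_t$ to be the index of the layer Min is currently filling, and set $y = Y_t/n$, $z = Z_t/n$. A round-by-round analysis in the sorted-column representation shows that Max's chip always lands in the current bottommost incomplete row, which completes after $(1-z)n + O(1)$ Max-moves once one subtracts the cells Min has already placed in that row via completed layers; similarly, Min's current layer requires $zn + O(1)$ Min-moves to finish, after accounting for those cells of the layer that Max has absorbed via completed rows. Writing the two increments, with the mutual ``consumption'' between the row and layer processes explaining the $-1/(1-z)$ term in $z'$, produces system~\eqref{eq:system_x+}. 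By Theorem~\ref{volatile_complete} the configuration is volatile once the sorted heights $h_i$ satisfy $h_i \ge n-i$ for all $i$; with these two strategies $h_i = \max(Y_t, Z_t - i + 1)$ up to $O(1)$, and this criterion first holds when $Y_t + Z_t = n - 1$, i.e.\ at $y + z = 1$, yielding the limiting time $x_+$.

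For part (ii), the second state variable is $z = Z_t/n$, the scaled side length of Min's current square sitting atop the current bottommost incomplete row. While the next L-shell $S_{Z_t+1} \setminus S_{Z_t}$ lies entirely inside the critical triangle, expanding the square costs Min $2Z_t + 1$ chips, and each row Max completes forces Min to refill the new top row of length $Z_t$; combining these effects produces system~\eqref{eq:system1_x-}. The L-shell first grazes the hypotenuse $i + j = n$ precisely when $y + 2z = 1$, and I would identify this as the rescaled time $\bar x$ at which the dynamics change: beyond $\bar x$ the effective L-shell length is no longer $2z n$ but is governed by the width $(1 - y - z)n$ of the remaining triangle at the square's current level, so the growth term $1/(2z)$ is replaced by $1/(2(1-y-z))$, producing system~\eqref{eq:system2_x-} with initial data matched at $\bar x$. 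Termination is again characterized by $y + z = 1$, and the resulting total time is $x_-$.

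The hard part will be the careful discrete-time bookkeeping around the boundary events: each row, layer, or L-shell completion, each sorted-column re-indexing forced by a firing, and in part (ii) the phase transition at $x = \bar x$, introduces $O(1)$ error terms that must be shown not to accumulate beyond $o(n^2)$ across $\Theta(n^2)$ rounds. Once these per-round errors are controlled, Lipschitz continuity of the right-hand sides of \eqref{eq:system_x+}--\eqref{eq:system2_x-} away from their singularities ($z = 0$, $z = 1$, and $y + z = 1$) lets one conclude by a standard Gronwall-type comparison between the rescaled discrete trajectory and the ODE solution, yielding the claimed $(1+o(1))$ asymptotics.
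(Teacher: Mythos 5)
Your plan is essentially the paper's own argument: partition the game into phases that end when Max completes a row, read off the per-phase increments of two scaled state variables, identify the resulting systems with~(\ref{eq:system_x+})--(\ref{eq:system2_x-}), and locate the stopping times via the volatility criterion of Theorem~\ref{volatile_complete}. The paper likewise only sketches the discrete-to-continuous error control that you defer to a Gronwall-type comparison, so no new ideas are needed there.

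One internal inconsistency in part (i) should be repaired before the derivation goes through. You define $Z_t$ as the index of the layer Min is currently filling and set $z=Z_t/n$. With that literal reading, column $i$ has height roughly $\max(Y_t,\,Z_t-i)$, so the criterion $h_i\ge n-i$ first holds when $Z_t= n-1$ (all layers of the critical triangle complete), \emph{not} when $Y_t+Z_t=n-1$; moreover the increment of $z$ per unit of $y$ would then be $1/z$ with no $-1/(1-z)$ term, so you would not recover system~(\ref{eq:system_x+}). The variable that actually satisfies~(\ref{eq:system_x+}) is the gap $z=(L_t-Y_t)/n$ between the current layer index $L_t$ and the number of completed rows---equivalently, the scaled number of cells Min still must place to finish her current layer---for which $y+z=L_t/n=1$ is the correct stopping condition and the $-1/(1-z)$ term records the fact that each row Max completes raises $Y_t$ and hence consumes one unit of the gap. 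Your own subsequent sentences (the ``$zn+O(1)$ Min-moves to finish the current layer'' and the ``mutual consumption'' remark) already implicitly use this gap variable, so the fix is purely definitional; part (ii) is consistent as written and matches the paper, including the phase change at $y+2z=1$ and the termination at $y+z=1$.
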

\begin{proof}
The approach used to determine the asymptotic behaviour of the processes involved is straightforward, but the details are tedious.  Therefore, we outline the proof only.

We focus on part (i) first. We partition the game into {\em phases}; a given phase ends once Max finishes filling a row. For example, the first phase lasts $n-O(1)$ rounds, and ends when Max fills the last cell in the first row (at which point Min is playing chips in layer $\sqrt{2n}+O(1)$). For a given round $t$, let $f(t)$ be the first incomplete layer and let $g(t)$ be the first incomplete row. To better study the process, we scale the timeline by introducing the function $x = x(t) = t/n^2$. Moreover, we define $y(x) = g(xn^2)/n$ and $z(x)=f(xn^2)/n-y(x)$. Suppose that at the beginning of some phase, the current configuration is described by $y(x)$ and $z(x)$---see Figure~\ref{fig:strategies}(a).

\begin{figure}[htbp]
\begin{center}

\begin{tabular}{ccc}
\includegraphics[width=1.8in]{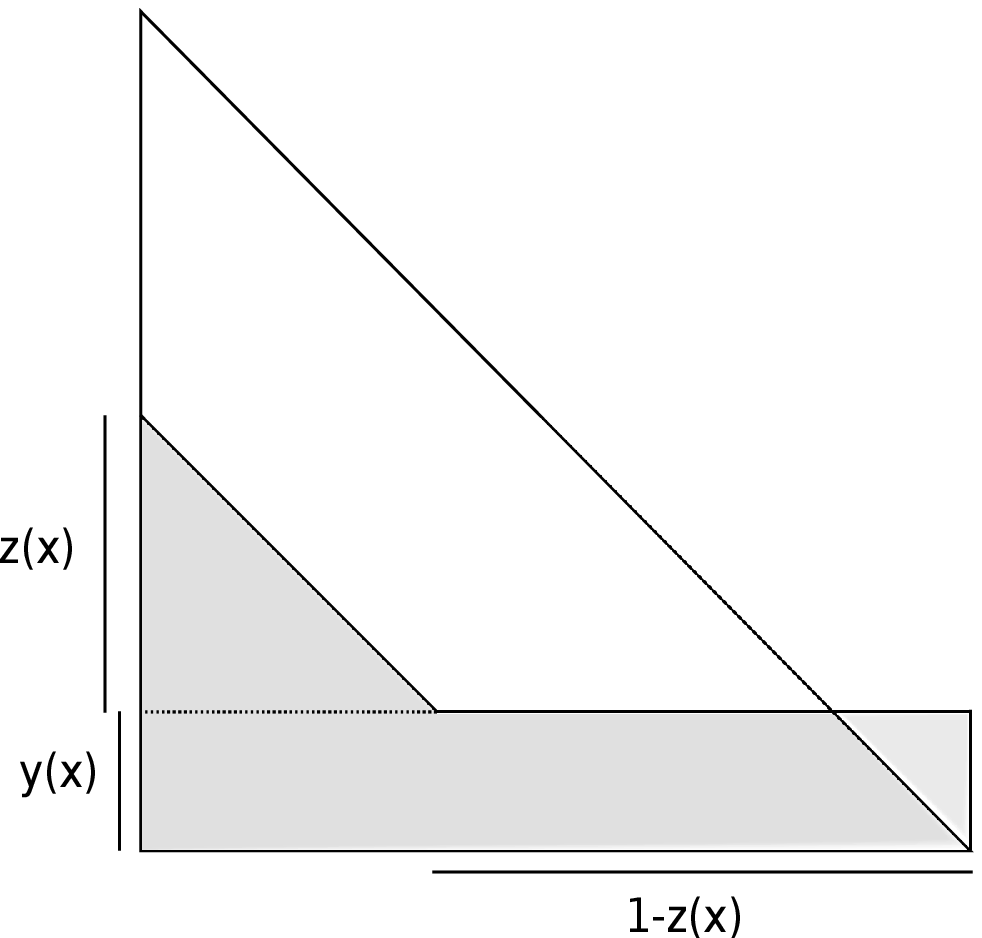} & \includegraphics[width=1.8in]{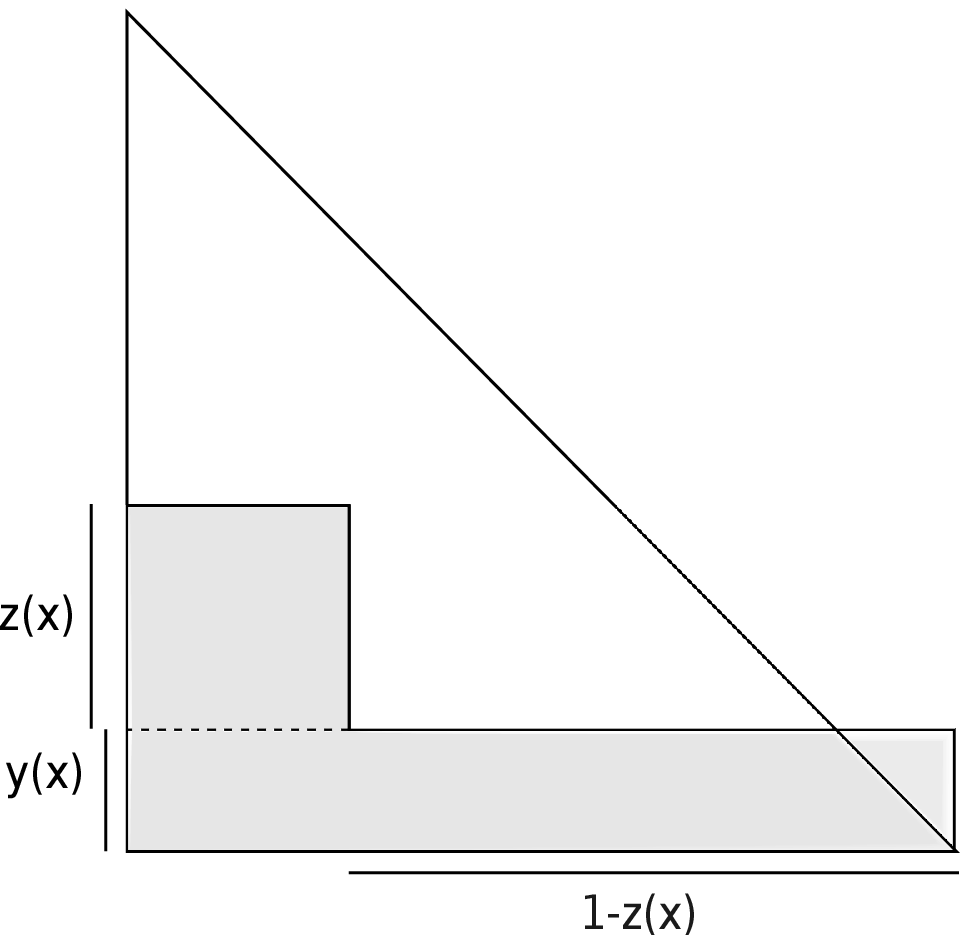} & \includegraphics[width=1.8in]{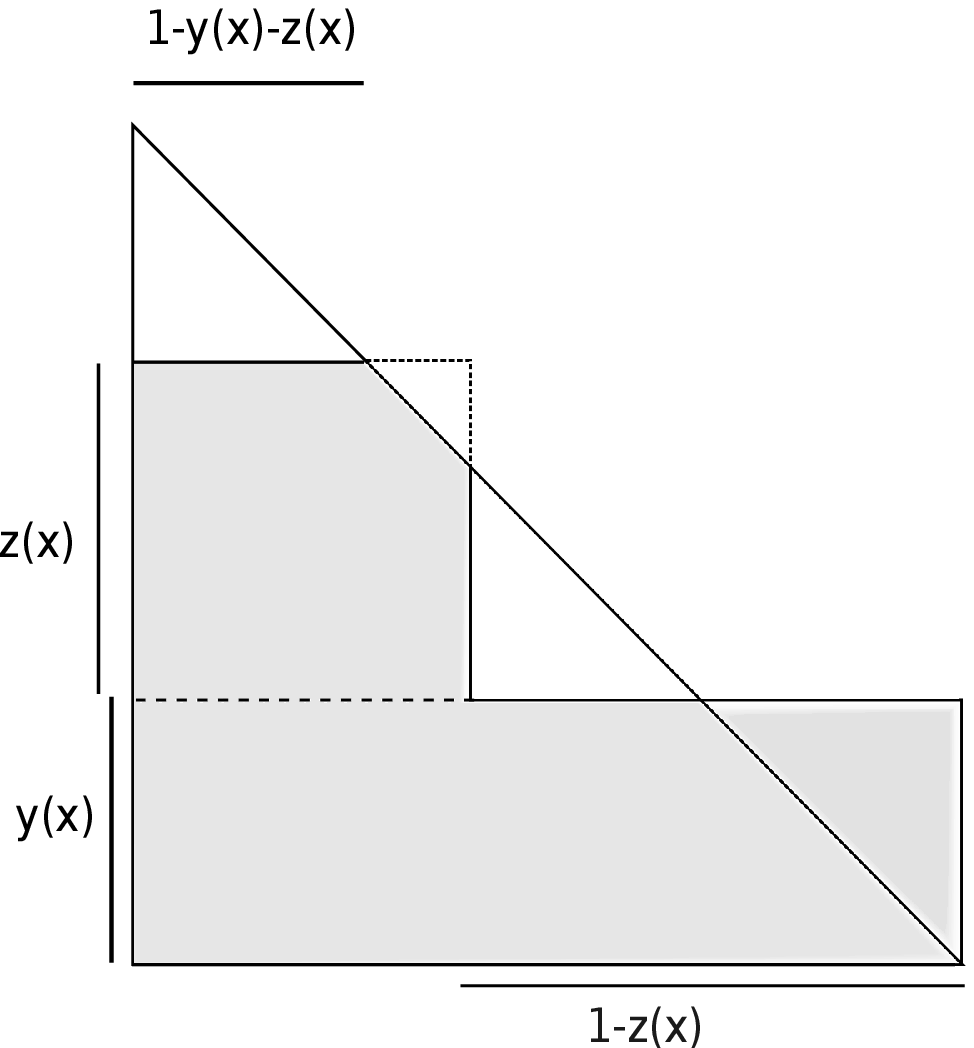}\\
(a) triangle vs.\ row strategy & (b) square vs.\ row strategy & (c) square vs.\ row strategy \\
& (the beginning) & (the end)
\end{tabular}
\end{center}
\caption{Two extreme strategies yielding lower and upper bounds.}\label{fig:strategies}
\end{figure}

The length of this phase is approximately $(1-z(x))n$, and during this phase $y(x)$ increases by $1/n$ (since $g(t)$ increases by 1). Since our goal is to investigate the asymptotic behaviour of this function as $n \to \infty$, we obtain the following differential equation: $y'(x)=1/(1-z(x))$. Similarly, since it takes $z(x)n$ rounds for Min to fill the current layer, during this phase Min fills, on average, $(1-z(x))/z(x)$ layers. Hence, $z(x)$ increases (again, on average) by $((1-z(x))/z(x) - 1)/n$ (note that the current layer is represented by $(y(x)+z(x))n$, and $y(x)$ increases by $1/n$). It follows that $$z'(x) = ((1-z(x))/z(x) - 1)/(1-z(x)),$$ so both functions can be modelled by the system of equations presented in~(\ref{eq:system_x+}). The game ends once the critical triangle has been filled with chips, that is, when $y(x)+z(x)=1$. The proof for this part is finished.

Now we move to part (ii). As before, we partition the game into phases; a given phase ends once Max finishes filling a row. The notation also does not change: for a given round $t$, let $f(t)$ be the side length of the square, and $g(t)$ the first incomplete row.  We focus again on the scaled analogues of $f$ and $g$, namely, the functions $y(x)$ and $z(x)$. Suppose that at the beginning of some phase, the current configuration is described by $y(x)$ and $z(x)$---see Figure~\ref{fig:strategies}(b). The behaviour of the function $y(z)$ is the same as before: the length of this phase is $(1-z(x))n$, and during this phase $y(x)$ increases by $1/n$. Note that once the number of rows is increased, the square ``shifts up'' one row, so Min must stop to fill in the ``missing'' cells of the square.  It takes $z(x)n$ rounds for Min to do this; the remaining $(1-2z(x))n$ rounds are spent expanding the square. Hence, during this phase Min increases the side length of the square by, on average, $(1-2z(x))/2z(x)$ (it takes $2z(x)n$ rounds to increase the length by 1). We obtain that $$z'(x) = ((1-2z(x))/2z(x))/(1-z(x)),$$ so both functions can be modelled by the system of equations presented in~(\ref{eq:system1_x-}).

The behaviour of the function $z(x)$ changes once the square intersects the critical triangle, that is, when $y(x)+2z(x)=1$. In the following phases, Min does not place chips in the whole square but restricts himself to the in-chips alone. Max continues filling rows as before (clearly this is a sub-optimal strategy for Max, since it would be more beneficial for him to play on the square outside the critical triangle)---see Figure~\ref{fig:strategies}(c). At the beginning of each phase, Min fills the ``missing'' cells of the square, which takes $(1-y(x)-z(x))n$ rounds.  In the remaining rounds, Min expands the square; it takes $2(1-y(x)-z(x))n$ rounds to increase the side length by 1. It follows that $$z'(x) = (y(x)/2(1-y(x)-z(x)))/(1-z(x)),$$ which corresponds to the system of equations presented in~(\ref{eq:system2_x-}). The game ends once the critical triangle has been filled, that is, when $y(x)+z(x)=1$. The proof is finished.
\end{proof}

We are now ready to establish upper and lower bounds on $\tg(K_n)$.  Recall that considering the number of chips needed to produce a volatile configuration yields $$\size{E(K_n)} \le \tg(K_n) \le 2\size{E(K_n)}-\size{V(K_n)}+1;$$ Equivalently, we have that $$(0.5+o(1))n^2 \le \tg(K_n) \le (1+o(1))n^2.$$  We asymptotically improve both bounds; note that Theorem~\ref{main1} follows immediately from Theorems~\ref{upper} and~\ref{lower}.

\begin{theorem}\label{upper}
Let $x_+$ be the real number defined by the system of differential equations~(\ref{eq:system_x+}). Then
$$
\tg(K_n) \le (2 x_+ + o(1)) n^2 = (4 x_+ +o(1)) \size{E(K_n)} < 0.637152 n^2.
$$  
\end{theorem}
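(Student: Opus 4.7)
The plan is to give Min a specific strategy---the triangle strategy---and to show that against any Max strategy, the game ends within $(2x_+ + o(1))n^2$ turns; since $\tg(K_n)$ is the length of the game when both players play optimally, Min's strategy immediately provides the claimed upper bound.

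Following the bookkeeping used in the proof of Lemma~\ref{lem:strategies}, I would partition the game into phases (a phase ending when Max completes a row) and track the macroscopic state via $y(x)$ and $z(x)$, where $x = t/n^2$, $y(x) = g(t)/n$ is the normalized first incomplete row, and $z(x) = f(t)/n - y(x)$ records the ``gap'' up to the first incomplete layer. The game terminates when the critical triangle is covered, i.e., when $y(x) + z(x) = 1$. Lemma~\ref{lem_inout} lets us treat firings as essentially transparent: each firing preserves the in-chip count, so the macroscopic evolution of $y$ and $z$ is unaffected by when firings occur.

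The core claim is that, against Min's triangle strategy, Max cannot prolong the game beyond the $(1 + o(1))x_+ n^2$ rounds achieved by the row strategy. Granted this, doubling to convert rounds to turns and invoking the numerical bound $2x_+ < 0.637152$ (from the numerical solution of~(\ref{eq:system_x+})) completes the proof. I would establish the claim by an exchange argument: any Max move deviating from the leftmost empty cell in the bottommost incomplete row can be replaced by the row-strategy move while producing a configuration that dominates (cell by cell) the original; by Lemma~\ref{volatile_domination}, the dominating configuration is at least as close to volatility, so Min's triangle strategy finishes the game at least as quickly. Iterating this exchange over all rounds reduces the analysis to the triangle-vs-row scenario of Lemma~\ref{lem:strategies}(i).

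The main obstacle will be making the exchange argument rigorous when firings intervene, since a deviation by Max can trigger firings and re-indexings whose effect on the Young-diagram shape is not obvious at the local level. The key to handling this is again Lemma~\ref{lem_inout}, which guarantees that the in-chip count (and therefore the coarse state variables $y$ and $z$) evolves independently of firing order; the standard ODE approximation used throughout the paper then yields the claimed asymptotic upper bound on $\tg(K_n)$.
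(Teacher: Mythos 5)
Your overall plan (give Min the triangle strategy, reduce to Lemma~\ref{lem:strategies}(i) by showing the row strategy is Max's best response, then double rounds to turns and quote the numerical bound) matches the paper's. But the core step --- the exchange argument --- has a genuine gap, in fact two.

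First, the domination claim cannot hold as stated. The row-strategy move and a deviating move each add exactly one chip, so the two resulting configurations have the same total number of chips and differ in exactly two cells; neither can dominate the other in the sense of Lemma~\ref{volatile_domination}. Second, even if some domination-type relation held, your inequality points the wrong way. You conclude that the row-strategy configuration is ``at least as close to volatility,'' i.e.\ that the game ends at least as soon when Max plays the row move. But Max wants the game to last \emph{longer}; if every row-strategy move led to a configuration closer to volatility than every deviation, that would make the row strategy the \emph{worst} response for Max, and the comparison to Lemma~\ref{lem:strategies}(i) would give a lower bound on the game against the deviating Max, not the upper bound you need. A third, smaller issue: even a correct one-move comparison must be propagated through the remaining interactive play of both players, which a single application of Lemma~\ref{volatile_domination} does not provide.

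The paper avoids all of this with a different mechanism: it partitions both the real game (Min triangle, Max arbitrary) and the ideal game (Min triangle, Max row) into phases indexed by the first incomplete \emph{layer}, and proves by induction on $k$ that the real game completes phase $k$ no later than the ideal game. The inductive step is a counting argument: after $t_k$ rounds, both games have placed $2t_k$ chips with layers $1,\dots,k$ full; any chip lying outside the first $k$ layers in row $r$ forces the layer-$(k+1)$ cell of row $r$ to be filled, so layer $k+1$ in the real game has at most as many empty cells as in the ideal game, and Min (who fills at least one such cell per turn) finishes phase $k+1$ at least as fast. You would need to replace your exchange argument with something of this flavour --- a monotone quantity compared phase by phase --- rather than a per-move domination swap.
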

\begin{proof}
We show that Min can force the claimed upper bound by using the triangle strategy.  By Lemma~\ref{lem:strategies}, it suffices to show that the row strategy for Max is an optimal response to the triangle strategy.

We consider two games.  In the {\em real game}, Min uses the triangle strategy, while Max uses any fixed strategy.  In the {\em ideal game}, Min uses the triangle strategy and Max uses the row strategy.  We claim that the real game finishes no later than the ideal game.  Since it does not affect the asymptotic length of the game, by Theorem~\ref{difference} we may suppose Min plays first.

We divide both games into several {\em phases}.  In either game, for $1 \le i \le n-1$, we let ``phase $i$'' denote the time during which layer $i$ is the first layer that has not yet been completely filled.  Equivalently, this is the period in the game during which Min intends to play in layer $i$.  By an argument similar to that in Lemma~\ref{lem_inout}, when a vertex is fired, the number of chips in each layer remains the same.  In particular, once a layer has been completely filled, it remains so even after vertices are fired.  Thus, once the game has reached phase $i$, it can never return to phase $j$ for any $j < i$.

To prove the claim it suffices to show that, for all $1 \le k \le n-1$, the real game finishes phase $k$ no later than the ideal game.  (Once phase $n-1$ ends, the game ends.)  In both games, phase 1 ends with the first turn, so the claim holds when $k = 1$; we proceed by induction on $k$.

In the ideal game, every phase ends on one of Min's turns; for $1 \le k \le n-1$, let phase $k$ end on Min's $t_k$th turn.  Consider the state of the game just after turn $t_k$, and let $c$ denote the number of empty cells in layer $k+1$ in the ideal game.  At this point, the grid for the ideal game contains chips in all cells of layers 1 through $k$, together with the remainder of the bottommost $k-c$ rows, and perhaps some additional cells in row $k+1-c$.  Clearly Min (not Max) will fill all remaining empty cells in layer $k+1$; hence $t_{k+1} = t_k+c$.  Now consider the state of the real game after round $t_k$.  In total, $2t_k$ chips have been placed.  Since the real game finishes phase $k$ no later than the ideal game, layers 1 through $k$ have all been filled.  Moreover, at least $k+1-c$ rows contain additional chips outside the first $k$ layers.  Hence, layer $k+1$ in the real game has at most $c$ empty cells.  Since Min fills at least one such cell on each of her subsequent turns, phase $k+1$ ends in the real game after at most $c$ additional moves by Min, for at most $t_k+c$ moves in total.  The claimed bound follows.
\end{proof}

\begin{theorem}\label{lower}
Let $x_-$ be the real number defined by the systems of differential equations~(\ref{eq:system1_x-}) and ~(\ref{eq:system2_x-}). Then
$$
\tg(K_n) \ge (2x_-+o(1)) n^2 = (4x_-+o(1)) \size{E(K_n)} > 0.5964 n^2.
$$  
\end{theorem}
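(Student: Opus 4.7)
The plan mirrors the proof of Theorem~\ref{upper}, with the roles of Max and Min reversed: I would show that Max can force the claimed bound by committing to the row strategy, and that against it the square strategy is an asymptotically optimal response for Min. Combined with Lemma~\ref{lem:strategies}(ii) and the estimate $x_- > 0.2982$, this yields $\tg(K_n) \ge 2x_- n^2 + o(n^2) > 0.5964 n^2$.

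Concretely, I would compare a \emph{real game} (in which Max plays the row strategy and Min plays arbitrarily) with an \emph{ideal game} (in which Max plays the row strategy and Min plays the square strategy). I would partition both games into phases by Max's row completions: phase $k$ ends when Max finishes row $k$. A row-analogue of Lemma~\ref{lem_inout} shows that firings preserve the number of chips in each row, so phases proceed in order. I would then argue by induction on $k$ that the real game completes phase $k$ no sooner than the ideal game. At the moment the ideal game's phase $k$ ends, the ideal grid consists exactly of rows $1,\ldots,k$ together with the square $S_s$ with lower-left corner $(1,k+1)$; in the real game at the same moment the same total number of chips has been played and, by induction, rows $1,\ldots,k$ are already filled.

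The critical observation is that in the ideal game Min places exactly one in-chip per turn---the square lies inside the critical triangle in the first regime, and in the second regime Min explicitly restricts herself to in-chips---whereas in the real game Min can place at most one in-chip per turn. Therefore the total count of Min's in-chips in the real game, after any fixed number of her turns, is bounded above by the corresponding count in the ideal game. Consequently the real game has at most as many cells of the critical triangle filled above row $k$, and Max's row strategy must play at least as many additional chips to advance to phase $k+2$. Summing the resulting phase-length bounds over $k$ and passing to the continuous limit $x = t/n^2$ reproduces the DE systems~(\ref{eq:system1_x-}) and~(\ref{eq:system2_x-}) as bounds on the real-game trajectory, giving the required lower bound.

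The main obstacle will be careful bookkeeping of vertex firings, since each firing re-sorts vertices and can shift chips between being in-chips and out-chips. I would handle this using the canonical schedule of Lemma~\ref{lem_inout} (always firing the current $v_1$ first) and verifying that under this schedule both the row-by-row decomposition and the in-chip comparison are preserved. A secondary subtlety is the column-height constraint, which forces each column's cells to fill from the bottom up and so prevents Min from placing in-chips arbitrarily high; the square strategy's one-in-chip-per-turn rate respects this constraint automatically, so Min cannot beat it in the real game either. The piecewise regime change at $y+2z = 1$ is handled by applying the same comparison in each of the two DE regimes separately.
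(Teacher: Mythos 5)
Your plan diverges from the paper's at the decisive point, and the divergence is fatal: the paper's Max does \emph{not} commit to the row strategy. Whenever no accessible row is available for an out-chip, the paper's Max creates a new accessible row by the \emph{cheaper} of two methods---completing the in-cells of the first inaccessible row, or filling column $1$ to force a firing---and the square emerges as Min's worst case precisely because it is the shape that balances the costs of these two threats (this is the extremal step ``$\min\{n-1-\ell_r,\,n-k-r_r\}$ is maximized when the remaining in-chips are arranged in a square-like shape''). Against a Max who only ever completes rows, Min profits from an unbalanced, tall-and-narrow shape, and the square is \emph{not} her optimal response. Concretely, let Min fill the columns of the critical triangle one at a time from left to right, raising each column $i$ from the current base row all the way to its triangle height $n-i$ (stopping at $n-2$ on column $1$ and saving the cell $(1,n-1)$ for her final move, so that no vertex ever reaches $n-1$ chips and no firings occur). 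An analysis in the style of Lemma~\ref{lem:strategies}, with $y$ the scaled base row and $u$ the scaled number of completed columns, gives $y'=1/(1-u)$ and $u'=1/(1-u-y)$ with $y(0)=u(0)=0$; this solves exactly to $y=-(1-u)\ln(1-u)$, terminates when $y+u=1$, i.e.\ at $1-u=1/e$, and the elapsed scaled time is
$$
x_{\mathrm{col}}=\int_{1/e}^{1}v\,(1+\ln v)\,dv=\frac14+\frac{1}{4e^{2}}\approx 0.2838 .
$$
So against the row strategy this Min ends the game with about $0.568\,n^2$ chips, strictly below the claimed $0.5964\,n^2$ and below $2x_-n^2$. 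No amount of bookkeeping will make your real-versus-ideal comparison go through, because the statement it is meant to prove is false.

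The logical gap that lets this through is your ``critical observation.'' Essentially every sensible Min strategy places exactly one in-chip per turn, so the per-turn in-chip count cannot distinguish the square from any competitor; what matters is \emph{where} the in-chips go. Indeed the inference ``real-Min has played at most as many in-chips, hence at most as many triangle cells above row $k$ are filled, hence Max needs at least as many further chips'' is backwards: a Min who wastes fewer chips on low rows (which Max's advancing base would have filled anyway) gets \emph{more} of her chips into the triangle above row $k$ with the same in-chip count, and the game ends \emph{sooner}. That is exactly what the column strategy exploits, and exactly what the paper's adaptive Max punishes: when Min's shape grows tall and narrow, he fills column $1$, fires it, and thereby converts her tall column into a nearly complete bottom row plus a new accessible row for his out-chips. (A smaller error: firings do not preserve the number of chips in each row---firing $v_1$ empties column $1$ and shifts everything up and to the left---they preserve the number of chips in each \emph{layer} and the number of in-chips, which is what the paper's phase decomposition actually uses.) To repair the argument you must give Max the firing threat, measure progress by out-chips played in accessible rows rather than by Min's in-chip count, and prove the square is extremal for the resulting two-sided minimization, as the paper does.
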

\begin{proof}
We give a strategy for Max to force the claimed lower bound.  To do this, we again consider two games, the {\em real game} and the {\em ideal game}.  In the real game, Min uses any fixed strategy, while Max uses the strategy described below.  In the ideal game, Min uses the square strategy and Max uses the row strategy.  By Lemma~\ref{lem:strategies}, it suffices to show that the real game finishes no sooner than the ideal game.  We suppose (by Theorem~\ref{difference}) that Min plays first.

Before presenting Max's strategy, we introduce some further terminology.  At any given point in the game, we say that a row in the grid is {\em complete} if all cells in that row contain chips.  We say that row $i$ is {\em accessible} if all cells in rows $1, 2, \ldots, i$ belonging to the critical triangle contain chips.  (That is, all possible in-chips have been played in the first $i$ rows.)  Note that when any row is accessible but not complete, Max may play an out-chip by playing in the leftmost empty cell in the first such row.  Accessible rows are thus desirable for Max, who aims to play as many out-chips as possible.  It is straightforward to see that firing a vertex increases the number of accessible rows by 1 (and may make some formerly-complete rows incomplete). 

Max plays as follows.  On each turn, if possible, he plays an out-chip in an accessible row; this is possible if and only if some row is both accessible and incomplete.  Otherwise, let row $i$ be the first inaccessible row, and suppose that the first $j$ cells in this row have already been filled.  Let $k$ denote the number of filled cells in column 1.  If $n-i-j < n-1-k$, then Max adds a chip to row $i$; otherwise, he adds a chip to column 1.  (Intuitively, Max aims to create a new accessible row as efficiently as possible, either by filling row $i$ directly or by causing vertex $v_1$ to fire.)

As in the previous proof, we divide the ideal game into {\em phases}.  In the ideal game, no vertices are ever fired (until the final move), so once a row is complete, it remains so for the remainder of the game.  For $i \in \N$, we denote by {\em phase $i$} the period of the game during which row $i$ is the first incomplete row.  In the ideal game, Min never plays an out-chip, so every phase ends after one of Max's turns.  Let $c_i$ denote the number of in-chips played during the first $i$ phases of the ideal game, with $c_0 = 0$.  In the real game, we define {\em phase $i$} to be the period during which the total number of in-chips is at least $c_{i-1}$, but less than $c_i$.

We claim that, for all $k$, at least $\binom{k+1}{2}$ out-chips are played in accessible rows during the first $k$ phases of the real game.  (Note that $\binom{k+1}{2}$ is precisely the number of out-chips played during the first $k$ phases of the ideal game.)  In particular, if both games last for $m$ phases, then at least $\binom{m}{2}$ out-chips must be played in the real game and at most $\binom{m+1}{2}$ in the ideal game; since necessarily $m < n$, the difference between $\binom{m}{2}$ and $\binom{m+1}{2}$ is asymptotically insignificant, so the theorem follows.  The claim holds trivially when $k=1$; we proceed by induction on $k$.

Consider the state of the real game at the beginning of phase $k$.  By the induction hypothesis, at least $\binom{k}{2}$ out-chips have been played in accessible rows.  Suppose first that there are at least $k$ accessible rows.  We aim to show that by the beginning of phase $k+1$, at least $\binom{k+1}{2}$ out-chips will have been played in accessible rows.  If at some point during phase $k$ all accessible rows are complete, then this is clearly the case.  Otherwise Max can, on each of his turns, play an out-chip in an accessible row.  Hence, Max plays at least as many out-chips during phase $k$ of the real game as during phase $k$ of the ideal game, and the claim again follows.

Now suppose instead that at most $k-1$ rows are accessible at the beginning of phase $k$ in the real game.  By assumption at least $\binom{k}{2}$ out-chips have been played in these rows, so there must be exactly $k-1$ accessible rows, all complete.  At this point in the game, let $\ell_r$ denote the number of chips played in column 1 and let $r_r$ denote the number of chips played in row $k$.  Similarly, let $r_i$ denote the numbers of chips played in row $k$ at the beginning of phase $k$ in the ideal game.  In both games, exactly $c_{k-1}$ in-chips have been played and the first $k-1$ rows are complete.  Subject to these constraints, $\min\{n-1-\ell_r,n-k-r_r\}$ is maximized when the remaining in-chips are arranged in a square-like shape, as in the ideal game; in particular, $\min\{n-1-\ell_r,n-k-r_r\} \le n-i-r_i$.  The former quantity is an upper bound on the number of in-chips Max must play before row $k$ becomes accessible, while the latter is the number of in-chips Max plays in the entirety of phase $k$ of the ideal game.  The claim now follows.
\end{proof}

\section{Random Graphs}\label{random}

We now establish a correspondence between the toppling number of $K_n$ and the toppling number of the random graph $G(n,p)$, for $p$ tending to zero sufficiently slowly.  In particular, our main result holds whenever $pn \ge n^{2/\sqrt{\log n}}$ (although most of the lemmas hold for even smaller values of $p$, namely $p \gg \log n / n$).  We begin by introducing a variant of the toppling game that facilitates the connection between the complete graph and the random graph.

Given $p \in (0,1)$, the {\em fractional toppling game} on a graph $G$ is played similarly to the ordinary toppling game, but with different vertex firing rules.  In the fractional game, a vertex $v$ fires once the number of chips on $v$ is at least $p\deg(v)$; when $v$ fires, $p\deg(v)$ chips are removed from $v$, and $p$ chips are added to each neighbour of $v$.  Since $p$ may be any real number, vertices need not contain whole numbers of chips (although each player still places exactly one chip on each turn).  When Max plays first and both players play optimally, the length of the game is denoted by $\tg_p(G)$.

The ordinary toppling game and fractional toppling game are related in a strong sense, made explicit in the following theorem.

\begin{theorem}\label{fractional}
For every $n$-vertex graph $G$, we have that $\tg_p(G) = p\tg(G) + O(n)$.
\end{theorem}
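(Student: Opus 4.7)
The argument rests on a simple scaling correspondence between the two games. Under the substitution $c \mapsto c/p$, the fractional firing condition ``$c(v) \ge p\deg(v)$, then remove $p\deg(v)$ from $v$ and add $p$ to each neighbour'' becomes exactly the ordinary firing rule ``$c(v) \ge \deg(v)$, then remove $\deg(v)$ from $v$ and add $1$ to each neighbour'', and the firing operation itself commutes with the rescaling. Consequently, a fractional configuration $c$ is volatile iff the scaled configuration $c/p$ is volatile in the ordinary sense. One fractional chip placed on a vertex $v$ contributes $1/p$ to the scaled configuration, so the fractional game plays like the ordinary game but with ``heavy'' chips of size $1/p$; a first-order count predicts that a volatile configuration is reached in about $p\,\tg(G)$ fractional turns.

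The plan is to establish the two matching bounds $\tg_p(G) \le p\tg(G)+O(n)$ and $\tg_p(G) \ge p\tg(G)-O(n)$ by a pair of \emph{shadow game} simulation arguments, in the same spirit as the real-vs.-ideal couplings used in Theorems~\ref{upper} and~\ref{lower}. For the upper bound, let $\sigma$ denote Min's optimal strategy in the ordinary game. In the fractional game Min would maintain a shadow ordinary game in which she plays $\sigma$, converting between the games via the rule that $\lceil 1/p\rceil$ shadow ordinary chips on a vertex $v$ correspond to one fractional chip on $v$; attribution of a fractional chip to Max or Min is determined by who placed the majority of chips in the associated shadow block. The shadow game terminates after at most $\tg(G)$ ordinary moves, translating to at most $\tg(G)/\lceil 1/p\rceil + O(n) = p\,\tg(G) + O(n)$ fractional moves. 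At that point the scaled fractional configuration $c_f/p$ differs from the shadow's terminal volatile configuration by at most $\lceil 1/p\rceil - 1/p = O(1)$ per vertex, hence by $O(n)$ total; Min can spend at most $O(n)$ further fractional moves closing the per-vertex deficit so that $c_f/p$ dominates a volatile ordinary configuration, at which point Lemma~\ref{volatile_domination} forces the fractional game to end. The lower bound is proved dually: Max uses his optimal ordinary strategy as a shadow, and the same translation forces at least $p\,\tg(G) - O(n)$ fractional moves before volatility can occur.

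The main obstacle I anticipate is the bookkeeping associated with discretization and with the alternation of turns. Since $1/p$ is generally not an integer and since Max and Min strictly alternate in both games, a block of $\lceil 1/p\rceil$ shadow ordinary moves will typically involve both players, and the majority-attribution step in constructing the shadow game must be argued to produce a \emph{valid} strategy for the simulating player rather than an illegal one. The natural resolution is to permit small deviations from the shadow's strict alternation, compensated by an additional $O(1)$ chips per vertex. Since there are only $n$ vertices and the per-vertex rounding deficit is bounded by an absolute constant, the total error is $O(n)$, matching the error term in the theorem; verifying that the various sources of slack really do cap at $O(n)$ across all phases of the game is the sole technical content of the proof.
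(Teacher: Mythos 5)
Your high-level plan coincides with the paper's: couple the fractional game with an imagined ordinary game at a time ratio of one fractional round to roughly $1/p$ ordinary rounds, show that the two configurations stay within $O(n)$ total chips of each other after scaling by $p$, and finish by transferring volatility between the games via domination. The scaling observation $c \mapsto c/p$ and the final domination step are both correct and are exactly what the paper uses. However, there is a genuine gap in the middle step, precisely at the point you flag as ``the sole technical content.'' The difficulty is not the rounding error $\lceil 1/p\rceil - 1/p$ per block, nor a per-vertex $O(1)$ deficit. The real problem is that in each fractional round the simulating player A must answer roughly $1/p$ imagined ordinary moves by the opponent, and A's optimal ordinary responses may be spread across many different vertices, yet A gets to place only \emph{one} fractional chip in that round. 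The vector of A's fractional placements can therefore drift, on individual vertices, arbitrarily far from $p$ times A's ordinary placements, and nothing in your proposal bounds this drift; the claim that the ``per-vertex rounding deficit is bounded by an absolute constant'' is unjustified and is not what ultimately gets proved.

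The paper closes this gap with a discrepancy argument: define $D^A(t)=\sum_{v} \left| y^A_t(v)-p x^A_{t}(v)\right|$, have A always place his fractional chip so as to minimize $D^A$, and observe that since $\sum_v y^A_t(v)$ and $\sum_v p x^A_t(v)$ agree to within $O(1)$, whenever $D^A>2n$ some vertex carries a deficit exceeding one whole chip, so A's next move decreases $D^A$ by $1$ while the gross increase per round is at most $1+O(p)$; hence $D^A$ never exceeds $2n+O(1)$, giving the $O(n)$ total mismatch. Your ``majority attribution'' device is also problematic as stated: the fractional chips are placed by the real players in a fixed alternating order, so their attribution is not yours to reassign; what must be constructed is the imagined ordinary game, and the paper does this by letting phase $t$ consist of exactly $\lfloor t/p\rfloor - \lfloor (t-1)/p\rfloor$ full ordinary rounds (an imagined opponent move, chosen to minimize the opponent's discrepancy, followed by A's optimal response), which makes attribution automatic and keeps the shadow game legal. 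With the discrepancy lemma supplied, your outline becomes the paper's proof.
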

\begin{proof}
We bound $\tg_p(G)$ both above and below in terms of $\tg(G)$.  For the lower bound we give a strategy for Max, and for the upper bound we give a strategy for Min.  Since these strategies are quite similar, we present them simultaneously.  Denote the players by ``A'' and ``B'' (note that A and B could represent either one of Min or Max).  Player A imagines an instance of the ordinary game on $G$ and uses an optimal strategy in that game to guide his or her play in the fractional game.  To simplify analysis, we postpone all firing until the end of the game.

For $t \in \N \cup \{0\}$, let $$r(t) = \max\{s \in \N \cup \{0\} : sp \le t\} = \floor{t/p}.$$  We divide the ordinary game into {\em phases}; phase $t$ consists of $r(t) - r(t-1)$ rounds.  Thus, $t$ rounds in the fractional game correspond to roughly $t/p$ rounds in the ordinary game.  To simplify the analysis, we track the number of chips played at each vertex by each player.  For a vertex $v$, let $x^A_t(v)$ and $x^B_t(v)$ denote the numbers of chips placed on $v$ by A and B, respectively, by the end of phase $t$ in the ordinary game.  Similarly, let $y^A_t(v)$ and $y^B_t(v)$ denote the numbers of chips placed on $v$ by A and B by the end of round $t$ in the fractional game.  We define the {\em discrepancy} at time $t$ for player A by
$$
D^A(t) = \sum_{v \in V(G)} \size{y_t^A(v) - px^A_{t}(v)};
$$
the discrepancy for player B, denoted $D^B(t)$, is defined similarly.

Player A's strategy is as follows.  By Theorem~\ref{difference} we may assume without affecting the asymptotics that B plays first (in both games), so each round of the fractional game consists of a move by B followed by a move by A.  After B plays in round $t$ of the fractional game, A imagines $r(t) - r(t-1)$ moves by B in the ordinary game; he chooses any list of moves minimizing $D^B(t)$.  (This can be done without foreknowledge of A's intervening moves, since said moves do not affect $D^B(t)$.)  Player A responds to each imagined move in turn, according to some optimal strategy for the ordinary game.  Finally A plays, in round $t$ of the fractional game, any move minimizing $D^A(t)$.

Suppose the fractional game lasts for $k$ rounds.  We claim that $D^A(k) < 2n+3$.  Suppose otherwise, and let $k_0$ be the greatest integer such that $D^A(k_0) < 2n+2$.  Fix $\ell > k_0$.  During phase $\ell$ of the ordinary game, A places at most $\ceil{1/p}$ chips.  Thus, when it comes time for A to play in the fractional game,
\begin{equation}\label{eqn_disc}
\sum_{v \in V(G)} \size{y^A_{\ell-1}(v) - px^A_{\ell}(v)} \ge D^A(\ell) - p\ceil{1/p} > 2n.
\end{equation}
By definition of $r(\ell)$, we have that $\sum_{v \in V(G)} y^A_{\ell-1}(v) = \ell-1$ and $\ell-1 < \sum_{v \in V(G)} px^A_{\ell}(v) \le \ell$.  Thus, $\sum_{v \in V(G)} \left(y^A_{\ell-1}(v) - px^A_{\ell}(v)\right) < 0$.  This observation, together with~(\ref{eqn_disc}) and the fact that there are $n$ terms in the sums, implies that $y^A_{\ell-1}(v_0) - px^A_{\ell}(v_0) < -1$ for some vertex $v_0$.  Thus, A can, with his next move in the fractional game, reduce the discrepancy by 1 (for example by playing on $v_0$).  Consequently, over the final $k-k_0$ rounds of the game, the gross increase in $D^A$ is at most $p\ceil{(k-k_0)/p} < k-k_0+1$, while the gross decrease is at least $k-k_0$.  Thus,
$$D^A(k) < 2n+2 + (k-k_0+1) - (k-k_0) = 2n+3,$$
as claimed.  Similarly, $D^B(k) < 2n+3$.  (In fact the bound can be tightened in this case, since B's imagined moves in each phase of the ordinary game can be spread across several vertices, thus offering greater flexibility in decreasing the discrepancy.  However, the stated bound suffices for our purposes.)

We bound the length of the fractional game by showing that both games must finish at roughly the same time.  Toward this end, we make a quick observation.  Let $c$ and $c_f$ be configurations for the ordinary game and fractional game, respectively.  If $pc(v) \ge c_f(v)$ for all vertices $v$, and $c_f$ is volatile for the fractional game, then $c$ is volatile for the ordinary game: this follows because any vertex that may fire in the fractional game may also fire in the ordinary game, and firing vertices preserves the needed inequality.  Similarly, if $c_f(v) \ge pc(v)$ for all vertices $v$, and $c$ is volatile for the ordinary game, then $c_f$ is volatile for the fractional game.

Now suppose that A is in fact Min.  The ordinary game lasts for at most $\tg(G)$ turns, since Min plays optimally.  Since it does not affect the asymptotics, we suppose for convenience that the ordinary game ends after exactly $k$ phases.  At this point, at most $p\tg(G) + O(1)$ turns have elapsed in the fractional game.  If the fractional game has already ended, then Min has enforced the desired upper bound on $\tg_p(G)$, and we are done.  Otherwise, we must show that the fractional game does not last ``too much'' longer.  Let $c$ and $c_f$ be the current configurations for the ordinary game and fractional game, respectively; that is, $c(v) = x^A_k(v)+x^B_k(v)$ and $c_f(v) = y^A_k(v)+y^B_k(v)$ for all vertices $v$.  We have that
$$\sum_{v \in V(G)} \ceil{\size{pc(v) - c_f(v)}} \le D^A(k) + D^B(k) + n \le 5n+6,$$
so after at most $5n+6$ more rounds in the fractional game, Min can produce a configuration $\hat c_f$ such that $\hat c_f(v) \ge pc(v)$ for all vertices $v$.  Since $c$ is volatile for the ordinary game, $\hat c_f$ is volatile for the fractional game, which establishes the desired upper bound on $\tg_p(G)$.

Suppose instead that player A is Max.  Now the ordinary game lasts for at least $\tg(G)$ turns; suppose it ends during phase $k$.  If after round $k$ the fractional game has not yet ended, then it has lasted for at least $p\tg(G) - O(1)$ turns, as desired.  Suppose instead that the fractional game ends during round $k_0$, where $k_0 < k$.  Let $c$ and $c_f$ be the corresponding configurations for the ordinary game and fractional game, respectively; that is, $c(v) = x^A_{k_0}(v)+x^B_{k_0}(v)$ and $c_f(v) = y^A_{k_0}(v)+y^B_{k_0}(v)$ for all vertices $v$.  Now
$$\sum_{v \in V(G)} \ceil{\size{c_f(v) - pc(v)}} \le D^A(k_0) + D^B(k_0) + n \le 5n+6,$$
so after at most $(5n+6)/p$ more rounds in the ordinary game, Min can produce a configuration $\hat c$ such that $p\hat c(v) \ge c_f(v)$ for all vertices $v$.  Since $c_f$ is volatile for the fractional game, $\hat c$ is volatile for the ordinary game.  This implies $\tg(G) \le 2k_0/p + 2(5n+6)/p$, so $p\tg(G) \le 2k_0 + O(n) = \tg_p(G) + O(n)$, as claimed.
\end{proof}

As a special case of Theorem~\ref{fractional}, we obtain the following important corollary.

\begin{corollary}\label{fractional_complete}
If $p \gg 1/n$, then $\tg_p(K_n) = (1+o(1))p\tg(K_n) = \Theta(pn^2)$.
\end{corollary}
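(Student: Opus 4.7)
The plan is to deduce the corollary directly from Theorem~\ref{fractional} together with the asymptotic bounds on $\tg(K_n)$ established in Theorem~\ref{main1} (equivalently, Theorems~\ref{upper} and~\ref{lower}). Specifically, I would first invoke Theorem~\ref{fractional} applied to $G=K_n$ to obtain
\[
\tg_p(K_n) = p\tg(K_n) + O(n).
\]
Next, I would invoke Theorem~\ref{main1}, which guarantees that $\tg(K_n) = \Theta(n^2)$; in particular $p\tg(K_n) = \Theta(pn^2)$.

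The only thing left to check is that the additive error term $O(n)$ is negligible compared with the main term $p\tg(K_n)$. Since $p\tg(K_n) \ge c p n^2$ for some absolute constant $c>0$ and all sufficiently large $n$, the hypothesis $p \gg 1/n$ implies that $n = o(pn^2) = o(p\tg(K_n))$, so $O(n)$ is absorbed into a $(1+o(1))$ factor multiplying $p\tg(K_n)$. This yields
\[
\tg_p(K_n) = (1+o(1))\, p\,\tg(K_n) = \Theta(pn^2),
\]
as desired.

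There is no real obstacle here: all of the substantive work has already been done in Theorem~\ref{fractional} (which provides the coupling between the ordinary and fractional games with a linear error term) and in Theorem~\ref{main1} (which pins down $\tg(K_n)$ up to constant factors). The corollary is essentially a bookkeeping statement verifying that the hypothesis $p \gg 1/n$ is exactly the right threshold for the $O(n)$ slack in Theorem~\ref{fractional} to be dominated by the main term.
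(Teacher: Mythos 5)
Your proposal is correct and matches the paper's (implicit) argument: the paper states the corollary as an immediate special case of Theorem~\ref{fractional}, with the $O(n)$ error absorbed because $p\tg(K_n) = \Theta(pn^2) \gg n$ when $p \gg 1/n$. The only cosmetic difference is that you cite Theorem~\ref{main1} for $\tg(K_n) = \Theta(n^2)$, whereas the elementary bounds $\size{E(K_n)} \le \tg(K_n) \le 2\size{E(K_n)} - \size{V(K_n)} + 1$ already suffice.
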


We now turn to the toppling game on random graphs.  The central idea behind our main result is that the toppling game on $G(n,p)$ behaves quite similarly to the fractional game on $K_n$ (so long as $p$ tends to zero slowly enough).  The formal proof of this fact is quite lengthy, so we present several lemmas before attacking the main result.  We begin by describing a useful change to the rules of the fractional game that does not asymptotically affect the length of the game on $K_n$.

\begin{lemma}\label{fractional_restricted}
Fix any function $\omega$ tending to infinity with $n$.  Consider the fractional game on $K_n$.  If we forbid either player from placing more than $\omega pn$ chips on the same vertex, then the length of the game (assuming both players play optimally) is $(1+o(1))\tg_p(K_n)$.
\end{lemma}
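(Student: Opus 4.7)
The plan is to show both $L_r \le (1+o(1))\tg_p(K_n)$ and $L_r \ge (1-o(1))\tg_p(K_n)$, where $L_r$ denotes the length of the restricted fractional game on $K_n$.

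For the upper bound, I would give Min a strategy in the restricted game matching her optimal unrestricted play up to $o(\tg_p(K_n))$ turns. The key is a chip-concentration equivalence on $K_n$: if Min places $k$ chips on a vertex $v$ and $v$ subsequently fires $f$ times, then by the firing-order invariance of Bj\"orner--Lov\'asz--Shor the resulting configuration is identical to what Min obtains by placing $k - fp(n-1)$ chips on $v$ and $fp$ chips on each of the remaining $n-1$ vertices (with no firing of $v$). Moreover, a simple calculation shows that any vertex to which Min sends $\omega pn$ chips must fire at least $\approx \omega - 1$ times during the game, since the stable configuration has at most $p(n-1)$ chips per vertex. This lets Min \emph{rebalance} her optimal strategy, redirecting any placements beyond $\omega pn$ chips on a single vertex to the least-filled vertex. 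Since Min plays $\Theta(pn^2)$ chips total (by Corollary~\ref{fractional_complete}), the average is $\Theta(pn)$ chips per vertex, which is $o(\omega pn)$, so rebalancing yields a valid restricted strategy performing essentially as well as the unrestricted optimum.

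For the lower bound, a symmetric argument lets Max adapt his optimal unrestricted strategy by the same rebalancing, so that his restricted strategy forces the game to last at least $(1-o(1))\tg_p(K_n)$ turns. Again, Max plays $\Theta(pn^2)$ chips total, so rebalancing keeps each vertex well under the $\omega pn$ cap.

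The main obstacle will be formalizing the rebalancing so that the modified strategy performs as well as the unrestricted optimum against an adversarial opponent. The concentration-equivalence is configuration-level, but in the interleaved game players react to each other's moves, and rebalancing Min's moves could alter Max's best responses. A careful coupling between the restricted game and an idealized unrestricted simulation is needed, tracking a bijection of vertex labels (exploiting vertex-transitivity of $K_n$) and showing that at most $O(n/\omega) = o(n)$ vertices can ever become ``full'' for a given player, so that a plentiful supply of underused vertices is always available as redirect targets.
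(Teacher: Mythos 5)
Your high-level plan---redirect any excess placements to underused vertices and argue that only $o(n)$ vertices ever saturate---is the same skeleton as the paper's proof, but two essential steps are missing, and you yourself flag the first as unresolved rather than resolving it.

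First, the coupling. The paper's player A does not ``rebalance an optimal strategy''; instead A runs a full \emph{imagined} unrestricted game in parallel: B's actual moves are copied into the imagined game, A computes his optimal reply there, and then translates that reply into a legal restricted move (playing at the intended vertex if it is unsaturated, otherwise at the least-indexed unsaturated vertex). With this setup your worry that ``rebalancing Min's moves could alter Max's best responses'' evaporates: the imagined game is a genuine unrestricted game played against B's real moves, so it ends within $\tg_p(K_n)$ turns (for Min) or lasts at least $\tg_p(K_n)$ turns (for Max), and the only discrepancy to control is between A's imagined placements and A's actual placements. Your proposal never specifies this simulation, and without it the claim that the modified strategy ``performs essentially as well'' is unsupported---an optimal strategy may genuinely want to concentrate chips, and the averaging observation ($\Theta(pn)$ chips per vertex on average) does not by itself show that redirecting the excess costs only $o(\tg_p(K_n))$.

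Second, the volatility transfer. At the end of the imagined ordinary game the restricted configuration is \emph{deficient} (not dominant) on the set $S$ of saturated vertices, so Lemma~\ref{volatile_domination} does not apply directly; the deficiency sits precisely on the vertices an adversary would exploit. The paper repairs this by having Min spend another $\size{S}pn = o(pn^2)$ chips on $S$ and then runs an invariant argument: it maintains $\sum_{v \in S} (\hat c_r(v) - c(v)) \ge pn\size{S}$ together with pointwise domination off $S$, and whenever the ordinary firing sequence fires a vertex of $S$ that cannot fire in the restricted game, it substitutes a surplus vertex of $S$ that can. Your proposal contains no analogue of this step. The ``chip-concentration equivalence'' you invoke is a configuration-level identity that does not substitute for it, and the observation that a vertex receiving $\omega pn$ chips must fire roughly $\omega$ times is not needed in the paper's argument and does not close either gap.
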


\begin{proof}
We bound the length of the game above and below.  For the upper bound it suffices to restrict only Min, since restricting Max cannot increase the length of the game.  Likewise, for the lower bound it suffices to restrict only Max.  Both arguments proceed similarly.

Denote the players by ``A'' and ``B''.  We consider two games: the {\em ordinary game}, in which there are no restrictions on moves, and the {\em restricted game}, in which A can play no more than $\omega pn$ chips on any one vertex.  (When restricting A, we consider only chips placed by A, and ignore chips placed by B.)  The restricted game is, in a sense, the ``real'' game, while the ordinary game is ``imagined'' by A as a tool to guide his play in the restricted game.  We give a strategy for A to ensure that the length of the restricted game is asymptotically the same as that of the ordinary game.  To simplify analysis, we postpone all vertex firing until the end of the game.

Player A plays as follows.  At the beginning of the game, A chooses an arbitrary indexing $v_1, v_2, \ldots, v_n$ of the vertices of $K_n$.  Whenever B plays in the restricted game, A imagines that B played identically in the ordinary game.  On A's turn, he first chooses his move in the ordinary game according to some optimal strategy for that game; suppose he chose to play at vertex $v$.  If A has placed fewer than $\floor{(\omega - 1) pn}$ chips on $v$ in the restricted game, then A places a chip on $v$.  Otherwise, A places a chip on the least-indexed vertex on which he has not already placed $\floor{(\omega - 1) pn}$ chips.  (There must be some such vertex given sufficiently large $n$, since A places fewer than $pn^2$ chips before the game ends, and for large $n$ we have that $n\floor{(\omega - 1) pn} \ge pn^2$.)

Suppose now that A is Min.  Min plays optimally in the ordinary game, so it finishes after at most $\tg_p(K_n)$ turns; the restricted game may last longer.  Once the ordinary game finishes, let $c$ and $c_r$ be configurations of the ordinary and restricted games, respectively.  Let $S = \{v \in V(G) : c(v) \not = c_r(v)\}$.  Each vertex in $S$, except perhaps for one, contains $\floor{(\omega - 1) pn}$ chips in the restricted game; hence $\size{S} \le \frac{pn^2}{\floor{(\omega-1)pn}} = o(n)$.  Allow the restricted game to continue for another $\size{S}pn$ rounds; on these extra turns, Min places another $pn$ chips on each vertex of $S$.  Let $\hat c_r$ be the resulting configuration of the restricted game.

We claim $\hat c_r$ is volatile.  Let $u_1, u_2, \ldots$ be an infinite firing sequence for $c$; we construct an analogous firing sequence for $\hat c_r$.  At all times we maintain the invariants that
$$
\sum_{v \in S} (\hat c_r(v) - c(v)) \ge pn\size{S}
$$
and that $\hat c_r(v) \ge c(v)$ for $v \not \in S$.  Both invariants clearly hold before any vertices are fired, and are maintained under the firing (in both games) of any vertex not in $S$.  Suppose we fire, in the original game, some vertex $v$ in $S$.  If $v$ may be fired in the restricted game, then doing so preserves both invariants.  Otherwise, since $\sum_{v \in S} (\hat c_r(v) - c(v)) \ge pn\size{S}$, there is some vertex $v'$ in $S$ such that $\hat c_r(v') - c(v') \ge pn$.  Since $\hat c_r(v') \ge p(n-1)$ we may fire $v'$ in the restricted game, and doing so maintains both invariants.  We may repeat the process indefinitely, hence $\hat c_r$ is volatile.  Consequently, the total number of chips played in the restricted game is at most $\tg_p(K_n) + 2pn\size{S} = \tg_p(K_n) + o(pn^2) = (1+o(1))\tg_p(K_n)$, as desired.

Now suppose instead that A is Max.  By Max's strategy, the ordinary game lasts for at least $\tg_p(K_n)$ turns, but the restricted game may finish earlier.  By an argument similar to that in the preceding paragraph, once the restricted game ends, Min can reach a volatile configuration for the ordinary game after another $o(pn^2)$ turns.  Letting $t_r$ denote the length of the restricted game, we have that $\tg_p(K_n) \le t_r + o(pn^2)$, so $t_r \ge \tg_p(K_n) - o(pn^2) = (1+o(1))\tg_p(K_n),$
as claimed.
\end{proof}

We are now ready to tackle random graphs.  We begin by establishing some properties of $G(n,p)$.  The following well-known result, known as the Chernoff Bound, is very useful (see for example Theorem~2.8~\cite{JLR}).

\begin{theorem}[\cite{JLR}]\label{thm:Chernoff}
Let $X$ be a random variable that can be expressed as a sum $X=\sum_{i=1}^n X_i$ of independent random indicator variables $X_i$, where $X_i \in {\rm Be}(p_i)$ (the $p_i$ need not be equal).  For $t \ge 0$,
\begin{eqnarray*}
\prob {X \ge \expect{X} + t} &\le& \exp \left( - \frac {t^2}{2(\expect{X}+t/3)} \right) \quad \text{ and}\\
\prob {X \le \expect{X} - t} &\le& \exp \left( - \frac {t^2}{2\expect{X}} \right).
\end{eqnarray*}
In particular, if $\eps \le 3/2$, then
\begin{eqnarray*}
\prob {|X - \expect{X}| \ge \eps \expect{X}} &\le& 2 \exp \left( - \frac {\eps^2 \expect{X}}{3} \right).
\end{eqnarray*}
\end{theorem}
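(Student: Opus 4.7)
The plan is to use the exponential moment (Chernoff--Cram\'er) method in its standard form and then pass to the stated approximations by a pair of elementary calculus lemmas.

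First, write $\mu = \expect{X}$. For any $\lambda > 0$, Markov's inequality applied to $e^{\lambda X}$ gives
$$
\prob{X \ge \mu + t} \le e^{-\lambda(\mu+t)} \expect{e^{\lambda X}}.
$$
Because the $X_i$ are independent indicators with $\prob{X_i = 1} = p_i$, the moment generating function factorizes as $\expect{e^{\lambda X}} = \prod_i \expect{e^{\lambda X_i}}$, and each factor satisfies
$$
\expect{e^{\lambda X_i}} = 1 + p_i(e^\lambda - 1) \le \exp\bigl(p_i(e^\lambda - 1)\bigr)
$$
by the elementary inequality $1+u \le e^u$. Multiplying yields $\expect{e^{\lambda X}} \le \exp\bigl(\mu(e^\lambda - 1)\bigr)$, so
$$
\prob{X \ge \mu + t} \le \exp\bigl(\mu(e^\lambda-1) - \lambda(\mu+t)\bigr).
$$

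Second, I would optimize the exponent over $\lambda$. Differentiation shows the minimum is attained at $\lambda^{\ast} = \log(1 + t/\mu)$, yielding the classical form
$$
\prob{X \ge \mu + t} \le \exp\bigl(-\mu H(t/\mu)\bigr), \qquad H(x) = (1+x)\log(1+x) - x.
$$
To reach the form stated in the theorem, I would prove the elementary inequality
$$
H(x) \ge \frac{x^2}{2(1 + x/3)} \qquad \text{for all } x \ge 0,
$$
by checking that the difference $f(x) = H(x) - x^2/(2(1+x/3))$ satisfies $f(0) = 0$ and $f'(x) \ge 0$; after clearing denominators the derivative inequality reduces to a polynomial inequality in $x$ that holds term-by-term. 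Substituting $x = t/\mu$ then gives $\mu H(t/\mu) \ge t^2/(2(\mu + t/3))$, which is exactly the first bound.

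For the lower tail I would run the same argument with $e^{-\lambda X}$ ($\lambda > 0$), obtaining
$$
\prob{X \le \mu - t} \le \exp\bigl(-\mu H_{-}(t/\mu)\bigr), \qquad H_{-}(x) = (1-x)\log(1-x) + x,
$$
for $0 \le t \le \mu$; the case $t > \mu$ is trivial since $X \ge 0$. Expanding $\log(1-x)$ as a power series gives $H_{-}(x) = \sum_{k\ge 2} x^k/(k(k-1)) \ge x^2/2$ for $x \in [0,1]$, and substituting produces the second bound. Finally, the symmetric bound for $\eps \le 3/2$ follows by setting $t = \eps\mu$: the lower-tail bound becomes $\exp(-\eps^2 \mu/2) \le \exp(-\eps^2\mu/3)$, while for the upper tail $\mu + t/3 = \mu(1 + \eps/3) \le (3/2)\mu$, so the exponent is at most $-\eps^2\mu/3$, and a union bound of the two tails contributes the factor 2.

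The main obstacle is verifying the inequality $H(x) \ge x^2/(2(1+x/3))$ uniformly for all $x \ge 0$; it is elementary but slightly fiddly, and care is needed because the upper-tail statement has no restriction on $t$. Everything else is routine once the exponential moment bound is in place.
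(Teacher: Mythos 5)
The paper does not prove this statement; it is quoted verbatim from the cited reference [JLR] (Theorem 2.1 and its corollaries there), and your exponential-moment argument is essentially the proof given in that source. Your proposal is correct in all its steps --- the optimization at $\lambda^{\ast}=\log(1+t/\mu)$, the inequalities $H(x)\ge x^2/(2(1+x/3))$ and $H_{-}(x)\ge x^2/2$, and the derivation of the two-sided bound for $\eps\le 3/2$ all check out (for the first inequality the cleanest route is to observe $f(0)=f'(0)=0$ and $f''(x)=\tfrac{1}{1+x}-\tfrac{27}{(3+x)^3}\ge 0$, which reduces to $x^3+9x^2\ge 0$).
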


We also need the following well-known observation (see e.g.~\cite{BMP} Lemma~2.2). For a given vertex $v$ and integer $i$, let $N(v,i)$ denote the set of vertices at distance $i$ from $v$.

\begin{lemma}\label{lem:expansion}
Let $d = p(n-1)$ and suppose that $\log n \ll d \ll n$.  For $G \in G(n,p)$, a.a.s.\ for every $v \in V(G)$ and $i \in \N$ such that $d^i = o(n)$ we have that
$$
|N(v,i)| = (1+o(1)) d^i.
$$
In particular, a.a.s.\ for every $v \in V(G)$, we have that $\deg(v) = (1+o(1))d$.
\end{lemma}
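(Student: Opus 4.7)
The plan is to prove the expansion by induction on $i$, carrying out for each fixed $v$ a breadth-first exposure of the edges of $G(n,p)$ starting from $v$ and applying Theorem~\ref{thm:Chernoff} layer by layer; the final claim will then follow by a union bound over $v \in V(G)$ and over the (at most $O(\log n / \log d)$) values of $i$ with $d^i = o(n)$.

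For the base case $i=1$, the degree $\deg(v) = |N(v,1)|$ is $\mathrm{Bin}(n-1, p)$ with mean $d$. I would apply Theorem~\ref{thm:Chernoff} with a parameter $\eps = \eps(n) \to 0$ chosen slowly enough that $\eps^2 d \gg \log n$ (which is possible since $d \gg \log n$), giving $|\deg(v) - d| < \eps d$ with probability $1 - o(1/n^2)$; a union bound over $v$ then yields $\deg(v) = (1+o(1))d$ for every $v$ a.a.s. For the inductive step, I would fix $v$ and expose edges in BFS order from $v$: having determined $B(v,j) := \bigcup_{k \le j} N(v,k)$ using only the edges incident to $B(v,j-1)$, the potential edges between $N(v,j)$ and $V(G) \setminus B(v,j)$ are still independent and unexposed. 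Hence, conditional on $B(v,j)$ and $N(v,j)$, the indicators $\mathbf{1}[u \in N(v,j+1)]$ are independent across $u \in V(G) \setminus B(v,j)$, and $|N(v,j+1)|$ is distributed as $\mathrm{Bin}\bigl(n-|B(v,j)|,\; 1-(1-p)^{|N(v,j)|}\bigr)$. The inductive hypothesis gives $|N(v,j)| = (1+o(1))d^j$ and $|B(v,j)| = (1+o(1))d^j = o(n)$, and since $|N(v,j)|\,p = (1+o(1))d^{j+1}/n = o(1)$ by $d^{j+1} = o(n)$, a first-order Taylor bound yields $1-(1-p)^{|N(v,j)|} = (1+o(1))|N(v,j)|\,p$. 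The conditional mean of $|N(v,j+1)|$ is therefore $(1+o(1))d^{j+1}$, and a second application of Theorem~\ref{thm:Chernoff} gives $|N(v,j+1)| = (1+o(1))d^{j+1}$ with conditional failure probability $o(1/(n\log n))$.

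The main obstacle I anticipate is the simultaneous tuning of $\eps = \eps(n)$ across all layers and all vertices: it must shrink slowly enough that the $(1+\eps)$ factors compounded over the $O(\log n / \log d)$ layers still leave an $o(1)$ accumulated error, yet fast enough that $\eps^2 d \gg \log n$ so that the per-layer failure probabilities are $o(1/(n\log n))$ and the final union bound over $v$ and $i$ gives total failure probability $o(1)$. The hypothesis $d \gg \log n$ is precisely what makes these two requirements compatible. The ``in particular'' statement on degrees is then just the $i=1$ case already settled in the base step.
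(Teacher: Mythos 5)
The paper does not actually prove this lemma: it is quoted as a ``well-known observation'' with a pointer to Lemma~2.2 of the Bollob\'as--Mitsche--Pra\l{}at reference, so there is no in-paper proof to compare against. Your BFS-exposure-plus-Chernoff argument is exactly the standard proof of this fact, and the structure is sound: the base case is a direct Chernoff bound on $\mathrm{Bin}(n-1,p)$, and the conditional distribution $\mathrm{Bin}\bigl(n-|B(v,j)|,\,1-(1-p)^{|N(v,j)|}\bigr)$ for the next layer, together with $|N(v,j)|p = o(1)$ when $d^{j+1}=o(n)$, is correct.

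The one place where your write-up, as stated, does not quite go through is the ``simultaneous tuning'' of a \emph{single} $\eps=\eps(n)$ that you flag as the main obstacle. You need $\eps^{2}d\gg\log n$ for the first layer and $m\eps=o(1)$ for the compounding over $m=O(\log n/\log d)$ layers, and these two constraints are \emph{not} compatible throughout the full range $\log n\ll d\ll n$: for instance, with $d=\log n\cdot\log\log\log n$ one needs $\eps\gg 1/\sqrt{\log\log\log n}$ from the first constraint but $\eps\ll\log\log n/\log n$ from the second. So the claim that ``$d\gg\log n$ is precisely what makes these two requirements compatible'' is false for a uniform $\eps$. The standard fix is to use a layer-dependent tolerance, e.g.\ $\eps_{j}\asymp\sqrt{C\log n/d^{\,j+1}}$ at layer $j+1$ (whose conditional mean is $(1+o(1))d^{\,j+1}$): each layer then still fails with probability $O(n^{-C/3})$, while the accumulated multiplicative error is
$$
\prod_{j}\bigl(1\pm\eps_{j}\bigr)=1+O\!\left(\sum_{j}\eps_{j}\right)=1+O\!\left(\sqrt{\log n/d}\,\right)=1+o(1),
$$
since the $\eps_{j}$ decrease geometrically and the first term dominates. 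With that adjustment your proof is complete and correct.
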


Our next lemma is an analogue of Lemma~\ref{fractional_restricted} for the ordinary game on $G(n,p)$.

\begin{lemma}\label{random_restricted}
Fix any function $\omega$ tending to $\infty$ with $n$.  Consider the toppling game on $G \in G(n,p)$, where $p \gg \log n / n$.  If we forbid either or both of the players from placing more than $\omega pn$ chips on the same vertex, then a.a.s.\ the length of the game (assuming both players play optimally) is $(1+o(1))\tg(G)$.
\end{lemma}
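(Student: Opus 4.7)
The plan is to mirror the proof of Lemma~\ref{fractional_restricted}, replacing the fractional game on $K_n$ with the ordinary integer toppling game on $G$. Without loss of generality (by Theorem~\ref{difference}) Min plays first. I restrict to the a.a.s.\ event on which $\deg(v) = (1+o(1))pn$ for every $v \in V(G)$ (Lemma~\ref{lem:expansion}) and $\size{E(G)} = (1+o(1))\binom{n}{2}p$ (Chernoff); in particular $\tg(G) \le 2\size{E(G)} - n + 1 = O(pn^2)$ a.a.s.

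Call the restricted player A. Player A simulates an unrestricted toppling game on $G$ alongside the actual restricted game, playing optimally in the simulation and copying each move into the restricted game whenever possible. When the intended vertex $v$ has already received $\floor{(\omega-1)pn}$ of A's chips in the restricted game, A instead plays on the least-indexed vertex at which he has not yet placed $\floor{(\omega-1)pn}$ chips. All firings are postponed to the end of either game. Exactly as in Lemma~\ref{fractional_restricted}, every vertex of the disagreement set $S$ (apart from at most one in-progress diversion target) carries $\floor{(\omega-1)pn}$ of A's chips in one of the configurations, so $\size{S} \le O(pn^2)/\floor{(\omega-1)pn} + 1 = O(n/\omega) = o(n)$.

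For the upper-bound case (A = Min), the simulated game ends at some volatile configuration $c$ within $\tg(G)$ turns. Min then continues in the restricted game for at most $\size{S}\cdot Cpn$ additional rounds, placing $Cpn$ extra chips on each vertex of $S$ for a suitably large constant $C$; this costs $o(pn^2) = o(\tg(G))$ extra turns. Writing $\hat c_r$ for the resulting restricted configuration, I claim $\hat c_r$ is volatile. I verify this by adapting the invariant/substitution argument of Lemma~\ref{fractional_restricted}: mimic a firing list $u_1, u_2, \ldots$ of $c$ in $\hat c_r$, firing each $u_j \notin S$ identically in both configurations and, when $u_j \in S$ is not yet fireable in $\hat c_r$, using the reserve $\sum_{v \in S}(\hat c_r(v) - c(v))$ to substitute some fireable $v' \in S$. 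The lower-bound case (A = Max) is symmetric, and the both-restricted case follows by combining the bounds.

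The main obstacle is this substitution step. In $K_n$ the neighborhoods of $u_j$ and $v'$ agree (up to the swap $u_j \leftrightarrow v'$), so substitution preserves the invariant exactly; in $G(n,p)$ the two neighborhoods can be genuinely different. I plan to control the resulting error by combining the bound $|\deg(u_j) - \deg(v')| = o(pn)$ supplied by Lemma~\ref{lem:expansion} with a standard random-graph expansion estimate guaranteeing a.a.s.\ that $\size{(N(u_j) \bigtriangleup N(v')) \cap S} = o(np)$ for every pair $u_j, v'$ and every subset $S$ with $\size{S} = o(n)$. These bounds yield a per-substitution perturbation of the reserve of order $o(pn)$, so the cumulative perturbation over the at most $\size{S} = o(n)$ substitutions is $o(pn^2)$, absorbed by choosing $C$ sufficiently large. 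The hypothesis $p \gg \log n/n$ is precisely what allows Lemma~\ref{lem:expansion} and the associated expansion properties to hold.
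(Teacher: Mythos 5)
There is a genuine gap, and it lies exactly where you suspected: the substitution step does not survive the passage from $K_n$ to $G(n,p)$, and your proposed repair does not close the hole. In the proof of Lemma~\ref{fractional_restricted} the invariant has two parts: the aggregate reserve $\sum_{v\in S}(\hat c_r(v)-c(v))\ge pn\size{S}$ \emph{and} the pointwise inequality $\hat c_r(v)\ge c(v)$ for every $v\notin S$. When you fire $v'$ in place of $u_j$, every vertex $w\in N(u_j)\setminus N(v')$ with $w\notin S$ receives a chip in the configuration descended from $c$ but not in the one descended from $\hat c_r$, so the pointwise inequality degrades at all such $w$. In $K_n$ this set is empty (up to the swap $u_j\leftrightarrow v'$), which is why the substitution is free there; in $G(n,p)$ it has size $\Theta(pn)$ per substitution, since $\size{N(u_j)\triangle N(v')}=(2+o(1))pn(1-p)$. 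Your expansion estimate controls only $(N(u_j)\triangle N(v'))\cap S$, which is the wrong quantity --- the damage occurs outside $S$, where you have placed no repair budget --- and moreover the estimate is false as stated, since for a fixed pair one may take $S\subseteq N(u_j)\setminus N(v')$ with $\size{S}=\Theta(pn)=o(n)$ and obtain an intersection of size $\Theta(pn)$ rather than $o(pn)$. Enlarging the constant $C$ cannot help, because the reserve lives on $S$ while the deficit accumulates on $V(G)\setminus S$.

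The paper avoids this entirely by changing where the excess chips are diverted: when A's intended vertex $v$ is saturated, A plays on a \emph{neighbour} of $v$ (cycling through $N(v)$, recursing if that neighbour is itself saturated), and once all of $N(v)$ has been used, $v$ is fired in the ordinary game. The diverted chips thus exactly simulate firings of $v$, so the restricted configuration dominates the (suitably fired) ordinary one throughout, and the only residual discrepancy at the end comes from partially completed cycles around saturated vertices. Since only $o(n)$ vertices are saturated, adding $\deg(v)$ chips to each and firing costs $o(pn^2)$ by Lemma~\ref{lem:expansion}, and Lemma~\ref{volatile_domination} finishes the argument. If you insist on diverting to an arbitrary unsaturated vertex, you would have to budget extra chips for the $\Theta(pn)$ shortchanged neighbours at every substitution, which amounts to rediscovering the neighbour-diversion idea in a more awkward form.
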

\begin{proof}
Since we aim to show that the specified bound on $\tg(G)$ holds a.a.s., we may assume that the property stated in Lemma~\ref{lem:expansion} holds deterministically for $G$.

As in the proof of Lemma~\ref{fractional_restricted}, we bound the length of the game above and below; for the upper bound it suffices to restrict only Min, and for the lower bound it suffices to restrict only Max.  The two players' strategies are very similar.

Denote the players by ``A'' and ``B''.  We consider two games: the {\em restricted game}, in which A can play no more than $\omega pn$ chips on any one vertex, and the {\em ordinary game}, in which there are no restrictions on moves.  Player A uses the ordinary game to guide his play in the restricted game.  We give a strategy for A to ensure that the length of the restricted game is asymptotically the same as that of the real game.  To simplify analysis, we postpone firing vertices until it is convenient.

When B makes a move in the ordinary game, A simply imagines that B made the same move in the restricted game.  When A himself plays, more care is needed.  Call a vertex {\em saturated} if A has already placed $(\omega-3) pn$ chips there in the restricted game.   Player A chooses his move for the ordinary game according to some optimal strategy for that game; suppose he plays at vertex $v$.  If $v$ is not saturated, then A plays at $v$ in the restricted game as well.  If in fact $v$ is saturated, then A instead attempts to play (in the restricted game) on some vertex $u$ in $N(v)$.  This may require recursion: if $u$ is saturated, then A attempts to play on some neighbour of $u$, and so on until he reaches an unsaturated vertex. On subsequent moves by A at $v$ in the ordinary game, A chooses different neighbours on which to play in the restricted game, until all neighbours have been used.  At this point we fire $v$ in the ordinary game, which resolves the discrepancy between the two games due to placement of excess chips at $v$.  Player A handles subsequent moves at $v$ similarly---by playing instead at each neighbour of $v$ in turn, firing $v$, and repeating the process.

Suppose A is Min.  Since Min plays optimally in the ordinary game, that game finishes in at most $\tg(G)$ turns.  If at that point the restricted game has already finished, then Min has enforced the desired upper bound, and we are done.  Otherwise, it suffices to show that Min can cause the restricted game to end in $o(pn^2)$ additional rounds, since $\tg(G) \ge \size{E(G)} = \Theta(pn^2)$.  Let $c$ and $c_r$ be the current configurations for the ordinary and restricted games, respectively.  We show that adding $o(pn^2)$ chips to $c_r$ yields a configuration that dominates $c$; since $c$ is volatile, the claim then follows by Lemma~\ref{volatile_domination}.  However, this is straightforward.  There is only one possible reason for discrepancy between $c$ and $c_r$: for each saturated vertex $v$ in the restricted game, it may be that Min attempted to play at some, but not all, neighbours of $v$.  To resolve this discrepancy, it suffices to add, to each such vertex, $\deg(v)$ chips, and then to fire.  Since the game lasts for $\Theta(pn^2)$ turns, the number of saturated vertices is $o(n)$; by Lemma~\ref{lem:expansion}, the number of chips added is thus $o(pn^2)$.  Moreover, no saturated vertex $v$ gains more than $2\deg(v)$ chips during this process, so in total Min plays no more than $(\omega-3)pn + 2\deg(v)$ chips on $v$; for sufficiently large $n$, this is less than $\omega pn$.

Suppose instead that A is Max.  This time, the ordinary game finishes in at least $\tg(G)$ turns.  If at this point the restricted game has not yet finished, then Max has enforced the desired lower bound.  Suppose instead that the restricted game finishes first.  We claim that once the restricted game ends, Min can cause the ordinary game to end within $o(pn^2)$ additional rounds.  Min can do this using a strategy similar to that in the last paragraph: to each saturated vertex $v$, add $\deg(v)$ chips in the ordinary game, and then fire.  This produces a configuration in the ordinary game that dominates the configuration of the restricted game, and hence is volatile.  The desired lower bound follows.
\end{proof}

Before proving the main result, we need two technical lemmas about the structure of the random graph.  The first is straightforward but, lacking a suitable reference, we provide a proof.

\begin{lemma}\label{lem:back_edges}
Let $d = p(n-1)$ and suppose that $n^{2/\sqrt{\log n}} \le d \ll n$.  Fix $G \in G(n,p)$, let $m$ be the largest natural number such that $d^m = o(n)$, and let $\psi = \psi(n) = n / d^m$. A.a.s.\ for every $v \in V(G)$ the following properties hold:
\begin{itemize}
\item [(i)] Fix $j < m$.  For every $u \in N(v,j-1) \cup N(v,j)$, we have that
$$
|N(v,j-1) \cap N(u)| < 1.5\sqrt{\log n}/(m-j).
$$
\item [(ii)] For every $u \in N(v,m-1) \cup N(v,m)$, we have that
$$
|N(v,m-1) \cap N(u)| = O(\log n).
$$
\item [(iii)] For every $u \in V \setminus \bigcup_{j \le m-1} N(v,j)$, we have that
$$
|N(v,m) \cap N(u)| =
\begin{cases}
O(d/\psi) & \text{ if } \psi \le d/\log n, \\
O(\log n) & \text{ if } \psi > d / \log n.
\end{cases}
$$
\end{itemize}
\end{lemma}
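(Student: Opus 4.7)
The plan is to condition on the a.a.s.\ event of Lemma~\ref{lem:expansion}, so that every BFS neighborhood $N(v,i)$ with $d^i=o(n)$ has size $(1+o(1))d^i$, and to exploit the fact that the hypothesis $d\ge n^{2/\sqrt{\log n}}$ forces $\log d\ge 2\sqrt{\log n}$ and $m\le \sqrt{\log n}/2$. For a fixed $v$ we perform a BFS from $v$, exposing edges one level at a time, so that after step $\ell$ the sets $N(v,0),\ldots,N(v,\ell)$ are determined and the only exposed edges involving $N(v,\ell)$ are those to $N(v,\ell-1)$. Write $L_\ell:=|N(v,\ell)|$; by the conditioning, $L_\ell=(1+o(1))d^\ell$ whenever $\ell\le m$, and the identity $L_{j-1}p=(1+o(1))/(\psi\, d^{m-j})$ will be used throughout.

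For parts (i) and (ii) we halt the BFS just before step $j$. For $u\in N(v,j-1)$ the quantity $|N(v,j-1)\cap N(u)|$ counts within-level edges incident to $u$, which have not been exposed and form a fresh $\mathrm{Bin}(L_{j-1}-1,p)$; for $u\in V\setminus N(v,\le j-1)$ the edges from $u$ to $N(v,j-1)$ are a fresh $\mathrm{Bin}(L_{j-1},p)$, and the event $u\in N(v,j)$ is contained in the event that this binomial is at least $1$. Summing the standard tail $\binom{N}{k}p^k\le (eNp/k)^k$ over $u$ yields
\begin{equation*}
\prob{\exists\,u\in N(v,j-1)\cup N(v,j):\ |N(v,j-1)\cap N(u)|\ge k} \;\le\; n\left(\frac{eL_{j-1}p}{k}\right)^{k} \;\le\; n\left(\frac{2e}{\psi\, d^{m-j}\, k}\right)^{k}.
\end{equation*}
For part (i) we choose $k=\ceil{1.5\sqrt{\log n}/(m-j)}$; the crucial cancellation is
\begin{equation*}
k(m-j)\log d \;\ge\; 1.5\sqrt{\log n}\cdot 2\sqrt{\log n} \;=\; 3\log n,
\end{equation*}
which dominates every other term (the terms $-k\log\psi$ and $-k\log k$ are negative, and $k\log(2e)=O(\sqrt{\log n})$). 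Hence the displayed probability is at most $n^{-2+o(1)}$, and a union bound over $v$ and the $O(\sqrt{\log n})$ choices of $j$ closes the argument. Part (ii) is the case $j=m$, for which $\mu:=L_{m-1}p=(1+o(1))/\psi\to 0$; taking $k$ equal to a sufficiently large constant multiple of $\log n$, the factor $(e\mu/k)^{k}$ alone beats $n^{-3}$, and the same union bound finishes.

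For part (iii) we instead halt the BFS just after step $m$, so that $N(v,m)$ is determined but no edge inside $V\setminus N(v,\le m-1)$ has yet been exposed. For each $u\in V\setminus N(v,\le m-1)$, the number of its neighbors in $N(v,m)$ is therefore a fresh binomial of mean $\mu=(1+o(1))d/\psi$. When $\psi\le d/\log n$ we have $\mu\ge\log n$, and the multiplicative Chernoff bound with a large constant slack gives $\prob{X\ge 4\mu}\le n^{-3}$; when $\psi>d/\log n$ we have $\mu<\log n$, and the tail $(e\mu/k)^{k}$ with $k$ a sufficiently large constant multiple of $\log n$ delivers the claimed $O(\log n)$ bound with the same probability. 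A union bound over $v$ and $u$ completes the proof.

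The main technical subtlety is the conditioning: for $u\in N(v,j)$, the edges between $u$ and $N(v,j-1)$ have in a sense already been ``used'' to certify that $u$ lies in $N(v,j)$, so they cannot be naively treated as fresh independent Bernoullis. The trick above sidesteps this by union-bounding $\prob{|N(v,j-1)\cap N(u)|\ge k}$ over all $u\in V\setminus N(v,\le j-1)$ (not only over those in $N(v,j)$) and noting that the event $u\in N(v,j)$ is automatic whenever the intersection is nonempty; each individual $u$ is then analyzed with a genuinely fresh binomial, and it is precisely at this step that the quantitative input $\log d\ge 2\sqrt{\log n}$ coming from $d\ge n^{2/\sqrt{\log n}}$ is required to produce the factor $3\log n$ demanded by the union bound.
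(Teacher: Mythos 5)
Your proposal is correct and follows essentially the same route as the paper: condition on Lemma~\ref{lem:expansion}, expose edges via a breadth-first search halted at the appropriate level so the relevant edges form fresh binomials, bound the upper tail (the paper uses $\binom{N}{k}p^k \le (Np)^k/k!$ where you use $(eNp/k)^k$, and it picks $k=3\log n/((m-j)\log d)$ and checks $k\le 1.5\sqrt{\log n}/(m-j)$ where you run the same computation in the other direction), and finish with a union bound; the conditioning subtlety you flag is handled identically in the paper by considering all undiscovered vertices rather than only those in $N(v,j)$. No substantive difference.
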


\begin{proof}
Since we aim for a result that holds a.a.s., we may assume that the property stated in Lemma~\ref{lem:expansion} holds deterministically for $G$.  Fix vertices $v$ and $u$.

For (i), we perform a breadth-first search from $v$ until the vertices of $N(v,j-1)$ are discovered.  (However, we do not expose any edges from $N(v,j-1)$ to undiscovered vertices, nor do we expose any edges joining vertices in $N(v,j-1)$.)  It follows from Lemma~\ref{lem:expansion} that $|N(v,j-1)|=(1+o(1)) d^{j-1}$. For a fixed vertex $u$, either undiscovered or in $N(v,j-1)$, let $X_u$ be a random variable denoting $|N(v,j-1) \cap N(u)|$. It is straightforward to see that
\begin{eqnarray*}
\prob{X_u \ge k} &\le &\binom{\size{N(v,j-1)}}{k} p^k \\
&\le & \frac {(1+o(1))^k}{k!} d^{k(j-1)}\frac{d^k}{n^k} \\
&\le & \frac{d^{kj}}{n^k} = o\left (\frac{d^{kj}}{d^{km}}\right ) = o(d^{k(j-m)}).
\end{eqnarray*}
Taking $k = 3\frac{\log n}{(m-j)\log d}$ yields that $$\prob{X_u \ge k} = o(d^{-3 \log n / \log d}) = o(n^{-3}).$$  Since there are only $O(n^3)$ choices for $v,u,$ and $j$, the claim follows by the Union Bound.  Moreover, since $d \ge n^{2/\sqrt{\log n}}$, we have that $k \le 1.5\sqrt{\log n}/(m-j)$, as desired.

We prove part (ii) similarly.  Perform a breadth-first search from $v$ until the vertices of $N(v,m-1)$ are discovered (but do not expose any edges from $N(v,m-1)$ to undiscovered vertices, nor any edges joining vertices in $N(v,m-1)$.) It follows from Lemma~\ref{lem:expansion} that $|N(v,m-1)|=(1+o(1)) d^{m-1}$. For a fixed vertex $u$, either undiscovered or in $N(v,m-1)$, let $X_u$ be a random variable denoting $|N(v,m-1) \cap N(u)|$. Now
$$
\expect{X_u} = (1+o(1)) d^{m-1} p = (1+o(1)) \frac {n}{d\psi} p = (1+o(1))\frac {1}{\psi} = o(1).
$$
Applying the Chernoff Bound with $t = 3 \log n$ (see Theorem~\ref{thm:Chernoff}), we find that $X_u = O(\log n)$ with probability $1-o(n^{-2})$. Since there are $O(n^2)$ possibilities for $v$ and $u$, the result holds by the Union Bound.

Finally, consider part (iii).  This time, we stop our breadth-first search once the vertices of $N(v,m)$ have been discovered. As before, for a fixed vertex $u$, either undiscovered or in $N(v,m)$, let $X_u$ be a random variable denoting $|N(v,m) \cap N(u)|$.  Now Lemma~\ref{lem:expansion} yields $|N(v,m)|=(1+o(1)) d^m$, so
\begin{eqnarray*}
\expect{X_u}& = &(1+o(1)) d^m p  \\ &= &(1+o(1)) \frac {n}{\psi} p \\ &= &(1+o(1))\frac {d}{\psi}.
\end{eqnarray*}
If $d/\psi = \Omega(\log n)$, then it follows from the Chernoff Bound that $X_u = O(d/\psi)$ with probability $1-o(n^{-2})$. Otherwise, the Chernoff Bound shows only that $X_u = O(\log n)$ with probability $1-o(n^{-2})$. Item (iii) now follows by the Union Bound.
\end{proof}

We return now to the toppling game on the random graph.

\begin{lemma}\label{firing_limit}
Let $p$ be such that $pn \ge n^{2/\sqrt{\log n}}$.  Fix $G \in G(n,p)$ and let $\omega = \log\log n$.  A.a.s. for every configuration $c$ of $G$ such that $c(v) \le 2\sqrt{\omega}pn$ for all vertices $v$, and every legal firing sequence $F = (u_1, u_2, \ldots, u_n)$ under $c$, every vertex of $G$ appears in $F$ only $O(pn / \log^2 n)$ times.
\end{lemma}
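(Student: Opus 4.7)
The plan is based on the elementary chip-conservation inequality combined with iteration, using the two structural lemmas \ref{lem:expansion} and \ref{lem:back_edges} to control the growth terms.

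First I would set up the basic inequality. Let $f_v$ denote the number of times $v$ appears in $F$. Since after executing all firings in $F$ the chip count at $v$ is at least zero,
$$f_v \deg(v) \le c(v) + \sum_{u \sim v} f_u.$$
Using the hypothesis $c(v) \le M := 2\sqrt{\omega}\,pn$ together with Lemma~\ref{lem:expansion} (which gives $\deg(v) = (1+o(1))d$ uniformly, where $d = p(n-1)$), this rearranges to
$$f_v \le (1+o(1))\left(\frac{M}{d} + \frac{1}{d}\sum_{u \sim v} f_u\right).$$

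Second, I would iterate this inequality $m$ times, where $m$ is the largest integer with $d^m = o(n)$. Writing $W_i(v,u)$ for the number of walks of length $i$ in $G$ from $v$ to $u$ and $W_i(v) = \sum_u W_i(v,u)$, the iteration yields
$$f_v \le (1+o(1))\frac{M}{d}\sum_{i=0}^{m-1} \frac{W_i(v)}{d^i} + (1+o(1))\frac{1}{d^m}\sum_{u} W_m(v,u)\, f_u.$$
The first term is straightforward: by near-regularity $W_i(v) \le (1+o(1))d^i$, so the contribution is at most $(1+o(1))\,mM/d$. Since $m \le \log n / \log d = O(\sqrt{\log n})$ (using $\log d \ge 2\sqrt{\log n}$ from the hypothesis $pn \ge n^{2/\sqrt{\log n}}$) and $M/d = 2\sqrt{\omega} = O(\sqrt{\log\log n})$, this first piece is $O(\sqrt{\log n \cdot \log\log n})$, which is easily $o(pn/\log^2 n)$.

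Third, I would bound the error term $\frac{1}{d^m}\sum_{u} W_m(v,u)\,f_u$. Since $F$ has length $n$ (so $\sum_u f_u \le n$), it suffices to prove that $\max_u W_m(v,u)$ is at most of order $d^{m+1}/(n\log^2 n)$, i.e., $O(d/(\psi \log^2 n))$ in the notation of Lemma~\ref{lem:back_edges}. To do so I would decompose any length-$m$ walk from $v$ to $u$ into a shortest path from $v$ to an ancestor of $u$ together with backtracking excursions; at a backtracking step emanating from depth $j$, the branching factor is precisely $|N(v,j-1) \cap N(w)|$ for the current vertex $w$. Parts (i), (ii), and (iii) of the back-edge lemma give exactly the estimates needed to bound these branching factors across the different regimes ($j < m$, $j = m$, and $u$ outside the ball of radius $m$), and the bounds there are calibrated so that the case split $\psi \le d/\log n$ versus $\psi > d/\log n$ matches the desired conclusion.

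The main obstacle is the combinatorial bookkeeping in Step~3: walks of length $m$ from $v$ to a vertex $u$ close to $v$ can be plentiful due to repeated backtracking, and one must multiply together the branching-factor bounds at each backtracking step without losing more than a polylogarithmic factor. Lemma~\ref{lem:back_edges} is the precise tool for this, and its three-part case split (which otherwise looks over-engineered) is what allows the final error term to land at $O(pn/\log^2 n)$ rather than at some polynomially larger quantity.
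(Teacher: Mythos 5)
Your Steps 1--2 are sound: the chip-conservation inequality $f_v\deg(v)\le c(v)+\sum_{u\sim v}f_u$ is correct and is essentially the same ``fuel'' accounting the paper uses, and the initial-chip term is indeed negligible. The fatal problem is Step 3. You need $\max_u W_m(v,u)=O\bigl(d^{m+1}/(n\log^2 n)\bigr)$, but this is false for every $m\ge 2$: taking $u=v$ (for $m$ even, say), the backtracking walks that repeatedly step to a neighbour and return already give $W_m(v,v)\ge(1-o(1))^{m/2}d^{m/2}$, whereas your target bound is $d^{m+1}/(n\log^2 n)\le d^{m/2+1}/(n\log^2 n)\cdot d^{m/2}=o(d^{m/2})$ since $d^{m/2+1}\le d^m=o(n)$. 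The maximum of $W_m(v,u)$ is attained at vertices \emph{near} $v$, exactly where backtracking dominates, and Lemma~\ref{lem:back_edges} cannot rescue this: it controls back-edges into the sphere $N(v,j-1)$, not the forward branching factor $\approx d$ that each outward excursion contributes. Nor can you fix this by bounding $\sum_u W_m(v,u)f_u$ more carefully instead of via the max: the dominant contributions then come from $f_u$ for $u$ in a small ball around $v$, which are precisely the quantities you are trying to bound, so the iteration becomes circular.

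The paper's proof runs the same conservation principle in the \emph{opposite} direction, which is what breaks the circularity. Assuming $f_v=K\gg pn/\log^2 n$, it shows the firings must amplify outward: the $K_{i-1}$ firings in $N(v,i-1)$ send $(1-o(1))dK_{i-1}$ chips into $N(v,i)$ and must be fueled almost entirely by chips arriving \emph{from} $N(v,i)$ (the initial supply inside radius $i-1$ being too small); since by Lemma~\ref{lem:back_edges}(i) each vertex of $N(v,i)$ has only $O(\sqrt{\log n}/(m-i))$ neighbours in $N(v,i-1)$, this forces $K_i\gtrsim \frac{m-i}{\sqrt{\log n}}\,d\,K_{i-1}$. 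Iterating out to radius $m+1$ (using parts (ii) and (iii) at the last two levels) yields $K_{m+1}\gg n$, contradicting $|F|=n$. So the three-part case split of Lemma~\ref{lem:back_edges} is used to lower-bound outward amplification level by level, not to upper-bound walk counts; you would need to restructure your argument along these lines for the proof to go through.
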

\begin{proof}
We may suppose that the property in Lemma~\ref{lem:expansion} holds deterministically for $G$.  Let $d = p(n-1)$, let $m$ denote the largest integer such that $d^m = o(n)$, and let $\psi = n/d^m$.  If $d = \Omega(n)$ then the bound holds easily: each vertex initially has enough chips to fire only $(2+o(1))\sqrt{\omega}$ times, and receives at most $n-1$ chips from earlier firings, which is enough for only a constant number of additional firings.  Hence, we may suppose that $d = o(n)$, and that the properties in Lemma~\ref{lem:back_edges} also hold deterministically for $G$.

Fix a vertex $v$ in $G$.  For $0 \le i \le m+1$, let $K_i$ denote the number of times vertices in $N(v,i)$ appear in $F$, and let $K = K_0$.  We aim to show that $K = O(pn / \log^2 n)$; suppose to the contrary that $K \gg pn / \log^2 n$.

We first claim that $K_i \ge \left (\prod_{j=1}^i \frac{m-j}{2\sqrt{\log n}} \right ) d^i K$ for $0 \le i \le m-1$.  This is trivially true when $i = 0$.  Fix $i \in \{1, 2, \ldots, m-1\}$, and suppose that $K_{i-1} \ge \left (\prod_{j=1}^{i-1} \frac{m-j}{2\sqrt{\log n}} \right ) d^{i-1} K$.  By Lemma~\ref{lem:back_edges}(i), each vertex in $N(v,i-1)$ has at least $(1-o(1))d$ neighbours in $N(v,i)$.  Thus, with each firing in $N(v,i-1)$, at least $(1-o(1))d$ chips are sent to $N(v,i)$.  In total, the firings in $N(v,i-1)$ send at least $(1-o(1))dK_{i-1}$ chips to $N(v,i)$.

In order to fuel the firings at $N(v,i-1)$, some chips must be received from $N(v,i)$.  By Lemma~\ref{lem:expansion}, at most $(2+o(1))\sqrt{\omega}d^i$ chips begin within distance $i-1$ of $v$, so the number of chips entering $N(v,i-1)$ from $N(v,i)$ must be at least
$$
(1-o(1))dK_{i-1} - (2+o(1))\sqrt{\omega}d^i = (1-o(1))dK_{i-1},
$$
since $\sqrt{\omega} = \sqrt{\log \log n} = o(K)$. By Lemma~\ref{lem:back_edges}(i) each vertex in $N(v,i)$ has at most $1.5\sqrt{\log n}/(m-i)$ neighbours in $N(v,i-1)$, so we have that
\begin{eqnarray*}
K_i &\ge &\frac{m-i}{1.5\sqrt{\log n}} (1-o(1))dK_{i-1} \\
&\ge& \left (\prod_{j=1}^i \frac{m-j}{2\sqrt{\log n}} \right ) d^i K.
\end{eqnarray*}

When $i=m$, it need no longer be the case that each vertex in $N(v,m)$ has so few neighbours in $N(v,m-1)$.  Applying Lemma~\ref{lem:back_edges}(ii) yields
\begin{eqnarray*}
K_m &\ge& \Omega \left( \frac {1}{\log n} \right) (1-o(1)) d K_{m-1} \\
&= &\Omega \left( \left (\prod_{j=1}^{m-1} \frac{m-j}{2\sqrt{\log n}} \right ) \frac {d^m}{\log n}K \right).
\end{eqnarray*}

Finally, consider $K_{m+1}$.  Lemmas~\ref{lem:expansion} and \ref{lem:back_edges}(ii) and (iii) together imply that every vertex in $N(v,m)$ has $(1-o(1))d$ neighbours in $N(v,m+1)$.  By Lemma~\ref{lem:back_edges}(iii), every vertex in $N(v,m+1)$ has at most $O(d\log n /\psi)$ neighbours in $N(v,m)$, so
\begin{align*}
K_{m+1} \ge \Omega\left(\frac{\psi}{d \log n}\right) (1-o(1))dK_{m} &= \Omega \left( \frac{\psi}{d \log n} \cdot \left (\prod_{j=1}^{m-1} \frac{m-j}{2\sqrt{\log n}} \right ) \frac {d^{m+1}}{\log n}K\right )\\
     &= \Omega \left( \frac{n}{\log^2 n} \left (\prod_{j=1}^{m-1} \frac{m-j}{2\sqrt{\log n}} \right ) K \right)\\
     &= \Omega \left( \frac{n}{\log^2 n}K \cdot \frac{(m-1)!}{(2\sqrt{\log n})^{m-1}}\right )\\
     &\ge \Omega \left(\frac{n}{\log^2 n}K \left ( \frac{m-1}{2e\sqrt{\log n}} \right )^{m-1} \right ).
\end{align*}
Since $pn \ge n^{2 / \sqrt{\log n}}$, we have that $m \le 0.5\sqrt{\log n}$.  Moreover, $f(x) = (x / (2e\sqrt{\log n}))^x$ is an increasing function for $x \in [1, 0.5\sqrt{\log n}+1]$.  Consequently,
\begin{eqnarray*}
K_{m+1} &\ge &\Omega \left(\frac{n}{\log^2 n}K \left ( \frac{\sqrt{\log n}}{4e\sqrt{\log n}} \right )^{0.5\sqrt{\log n}} \right ) \\
&\ge &\Omega \left(\frac{n}{\log^2 n}K \left ( \frac{1}{12} \right )^{0.5\sqrt{\log n}} \right ).
\end{eqnarray*}
By assumption $K \gg pn / \log^2 n$, so
$$
K_{m+1} \gg n \cdot \frac{pn}{\log^4 n \cdot 12^{0.5\sqrt{\log n}}}.
$$
However,
\begin{eqnarray*}
pn &\ge &n^{2 / \sqrt{\log n}} \\
&=& \exp(2 \sqrt{\log n}) \\
&\gg &\log^4 n \cdot 12^{0.5\sqrt{\log n}},
\end{eqnarray*}
so $K_{m+1} \gg n$, which is impossible as $F$ has length $n$.  Thus, we have reached a contradiction, from which it follows that $K = O(pn / \log^2 n)$.
\end{proof}

We are almost ready to prove our main result; we need just one more structural lemma about the random graph.

\begin{lemma}\label{discrepancy}
Let $p$ be such that $pn \ge n^{2/\sqrt{\log n}}$.  Fix $G \in G(n,p)$ and let $\omega = \log \log n$.  A.a.s.\ for all sets $S$ such that $\size{S} \le 2n/\omega$, we have either
$$\sum_{v \in V(G)} \size{\disc(v)} = o(pn)\size{S} \quad \text{ or } \quad \sum_{v \in V(G)} \size{\disc(v)} = O(n \log n),$$
where $\disc(v) = \size{N(v) \cap S} - p\size{S}$.
\end{lemma}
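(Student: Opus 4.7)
My plan is to first extract the signed part of the discrepancy (which is easy using expansion), then reduce to bounding the one-sided deviations, and finally handle those via Chernoff bounds and a union bound, split into two regimes of $|S|\cdot p$.

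Since the statement asserts an a.a.s.\ property, I would assume at the outset that $G$ satisfies the conclusion of Lemma~\ref{lem:expansion}; so every vertex has $\deg(v)=(1+o(1))pn$. Fix $S\subseteq V(G)$ with $|S|=s\le 2n/\omega$, and write $X_v=|N(v)\cap S|$, so $\disc(v)=X_v-ps$. The expansion property immediately gives
\[
\sum_v X_v \ = \ \sum_{u\in S}\deg(u)\ =\ (1+o(1))pns,
\]
hence $\bigl|\sum_v\disc(v)\bigr|=o(pns)$. Using $|a|=a+2(-a)_+$ with $a=X_v-ps$,
\[
\sum_v|\disc(v)|\ =\ \sum_v\disc(v)+2\sum_v(ps-X_v)_+\ =\ o(pns)+2\sum_v(ps-X_v)_+,
\]
so it suffices to bound $\sum_v(ps-X_v)_+$ uniformly in $S$.

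Next, I would split into two regimes. If $ps\le\log n$, the trivial bound $(ps-X_v)_+\le ps$ yields $\sum_v(ps-X_v)_+\le nps\le n\log n$, and $\sum_v(X_v - ps)_+ \le \sum_v X_v = (1+o(1))pns = O(n\log n)$, so $\sum_v|\disc(v)|=O(n\log n)$, giving the second alternative. If $ps>\log n$, each $X_v$ ought to be tightly concentrated around $ps$: Chernoff, combined with a union bound over $v$ and the $\binom{n}{s}$ sets of size $s$, should yield $|X_v-ps|\le\varepsilon_s\cdot ps$ simultaneously for all $v$ and $S$, with some $\varepsilon_s=o(1)$, giving $\sum_v|\disc(v)|=o(pns)$. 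A final union bound over $s=1,\ldots,\lfloor 2n/\omega\rfloor$ completes the argument.

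The main obstacle lies in the union bound in the large regime, particularly when $p$ is near the permitted lower bound $p\approx n^{-1+2/\sqrt{\log n}}$. A standard per-vertex Chernoff bound gives $\Pr\bigl(|X_v-ps|\ge\varepsilon ps\bigr)\le 2\exp(-\varepsilon^2 ps/3)$, and for this to survive the $n\cdot\binom{n}{s}$ union bound one would naively need $\varepsilon^2\gtrsim\log n/p$, which forces $\varepsilon\ge 1$ when $p$ is small. Overcoming this likely requires a sharper tool---for instance Talagrand's inequality applied to the convex Lipschitz function $Y_S=\sum_v|\disc(v)|$, or a direct second-moment estimate of $\sum_v(X_v-ps)^2$ (whose expectation is $\Theta(nsp)$, much smaller than the maximum-coordinate contribution $s^2$ that spoils Bernstein)---together with the crucial observation that the constraint $s\le 2n/\omega$ yields $\log\binom{n}{s}=O(s\log\omega)=o(n)$, far less than the naive $s\log n$, so the union bound is substantially less demanding than it first appears. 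Balancing these quantities case-by-case (small $s$, moderate $s$, and $s$ close to $2n/\omega$) is where the technical work of the proof is concentrated.
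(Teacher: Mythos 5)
Your setup (reducing to the one-sided deviations via $\size{a}=a+2\max(-a,0)$ and $\sum_v \size{N(v)\cap S}=\sum_{u\in S}\deg(u)=(1+o(1))pn\size{S}$, and disposing of the regime $p\size{S}\le\log n$ by the trivial bound) is fine and matches the paper's opening reduction in spirit. But the heart of the lemma is exactly the obstacle you flag and then leave unresolved: a per-vertex Chernoff bound followed by a union bound over the $\binom{n}{s}$ sets cannot give relative error $o(1)$ when $p$ is near $n^{-1+2/\sqrt{\log n}}$, and neither of your proposed substitutes is carried out. Moreover, your claimed saving $\log\binom{n}{s}=O(s\log\omega)$ holds only for $s$ near $2n/\omega$; for intermediate $s$ (say $s=\sqrt{n}$) it is $\Theta(s\log n)$, so the union bound is not as cheap as you suggest. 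Talagrand applied to $Y_S=\sum_v\size{\disc(v)}$ gives Gaussian tails of width governed by the Euclidean Lipschitz constant, roughly $\sqrt{sn}$, which yields $\exp(-\Theta(\eps^2 p^2 ns))$ for deviations $\eps pns$; this does not beat $\binom{n}{s}$ when $p^2 n\to 0$, which happens throughout the permitted range. The second-moment route runs into the same wall you yourself note.

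The idea you are missing is the one the paper uses: rather than concentrating each $\size{N(v)\cap S}$ (or the whole sum $Y_S$), bound the \emph{number} of vertices whose discrepancy falls in each dyadic range. For $v\notin S$ the variables $\size{N(v)\cap S}$ are independent across $v$, so the probability that some $k$ prescribed vertices all have $\size{\disc(v)}\ge\delta$ is at most $q^k$ with $q$ the per-vertex tail probability; choosing $k$ inversely proportional to the per-vertex exponent (e.g.\ $k=6\omega^3 n/\mu$ for the threshold $\mu/\omega$, and $k_i=3n/(2^{i-1}\mu)$ for the thresholds $2^i\mu$) makes $\binom{n}{k}q^k\le 2^{O(n)}e^{-2n}=o(e^{-n})$ \emph{uniformly in $p$}, which survives the union bound over all $2^n$ sets. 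The deterministic contribution of each bucket is then (count)$\times$(bucket value), and summing buckets gives $o(pn)\size{S}+O(n\log n)$ --- this is where the second alternative $O(n\log n)$ in the statement actually comes from, something your plan never produces in the large-$p\size{S}$ regime. Finally, the vertices $v\in S$ must be handled separately (the independence across $v$ fails there); the paper bounds their contribution by $2m+p\size{S}^2$ with $m$ the number of edges inside $S$ and applies Chernoff to $m$. Without the bad-vertex counting argument your proof does not close.
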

\begin{proof}
We may assume that the properties in Lemmas~\ref{lem:expansion} and \ref{firing_limit} hold deterministically for $G$.  In particular, $\sum_{v \in V(G)} \size{\disc(v)} \le \sum_{v \in S} \deg(v) = (1+o(1))pn\size{S}$, so the claim is trivial when $\size{S} < \log n / p$.

Fix a subset $S$ of $V(G)$ with $\log n / p \le \size{S} \le 2n/\omega$.  To establish the claimed bound on $\sum_{v \in V(G)} \size{\disc(v)}$, it suffices to bound $\sum_{v \in V(G) \setminus S} \size{\disc(v)}$ and $\sum_{v \in S} \size{\disc(v)}$ separately; we begin with the former.  Let $\mu = p\size{S}$, and fix $v \not \in S$.  Since $\size{N(v) \cap S}$ is binomially distributed with expectation $\mu$, the Chernoff Bound yields
$$\prob{\size{\disc(v)} \ge \frac{1}{\omega}\mu} \le 2\exp\left (-\frac{1}{3\omega^2}\mu \right ).$$
Let $E$ be the event that there are at least $6\omega^3n/\mu$ vertices $v$ outside $S$ having $\size{\disc(v)} \ge \frac{1}{\omega}\mu$.  Since this property is determined independently for all choices of $v$, we have that
\begin{eqnarray*}
\prob{E} &\le &\binom{n}{6\omega^3n/\mu} 2^{6\omega^3n/\mu}\exp(-2n) \\
&\le& 2^n 2^{o(n)} \exp(-2n) = o(e^{-n}).
\end{eqnarray*}

Similarly, fix $i \in \{2, 3, \ldots, \ceil{\log_2 n}\}$.  The Chernoff Bound yields
\begin{eqnarray*}
\prob{\disc(v) \ge 2^i \mu} &\le &\exp\left(-\frac{(2^i - 1)^2\mu^2}{2\mu(1 + (2^{i} - 1)/3)}\right) \\
& \le &\exp(-2^{i-1}\mu).
\end{eqnarray*}
Now let $E_i$ be the event that there are at least $k_i$ vertices $v$ having $\disc(v) \ge 2^i\mu$, where $k_i = 3n / (2^{i-1} \mu)$.  We have that
\begin{eqnarray*}
\prob{E_i} &\le& \binom{n}{k_i}\exp(-2^{i-1} \mu k_i) \\
& \le& 2^n 2^{-3n} = 2^{-2n} = o(e^{-n}).
\end{eqnarray*}

Thus, the probability that at least one of the events $E$ or $E_i$ holds for $S$ is $o(\log n \, e^{-n})$.  Since there are fewer than $2^n$ choices for $S$, a.a.s.\ for all such sets $S$, neither $E$ nor any of the $E_i$ hold.  But now, for every such set $S$,
\begin{align*}
\sum_{v \in V(G) \setminus S} \size{\disc(s)} &\le n \cdot \frac{1}{\omega}\mu + \frac{6\omega^3n}{\mu} \cdot 4\mu + \sum_{i=1}^{\ceil{\log_2 n}} \left ( 2^{i+1}\mu k_i\right )\\
                                    &= O\left (\frac{pn}{\omega}\right )\size{S} + O(\omega^3 n) + O(n \log n)\\
                                    &= o(pn)\size{S} + O(n \log n),
\end{align*}
as desired.

Finally, consider $\sum_{v \in S} \size{\disc(v)}$.  Letting $m$ denote the random variable counting the number of edges with both endpoints in $S$, we have that
\begin{eqnarray*}
\sum_{v \in S} \size{\disc(v)} &\le &\sum_{v \in S} \size{N(v) \cap S} + \sum_{v \in S} p\size{S} \\
& = &2m + p\size{S}^2 = 2m + o(pn)\size{S}.
\end{eqnarray*}
Hence, it suffices to show that a.a.s. $m = o(pn)\size{S}$.  The random variable $m$ is binomially distributed with expectation $\mu = p\binom{\size{S}}{2} = o(pn)\size{S}$.  By the Chernoff Bound, we have that
$$\prob{m \ge 2\mu} \le \exp\left (-\frac{1}{3}\mu\right ) = o(e^{-n}).$$
Since there are fewer than $2^n$ choices for $S$, a.a.s. we have that $m \le 2\mu$ regardless of choice of $S$.
\end{proof}

We are now finally ready to prove our main result on random graphs.

\begin{proof}[Proof of Theorem~\ref{main2}]
Since we aim for a property that holds a.a.s., we may assume that the properties in Lemmas~\ref{lem:expansion}, \ref{firing_limit}, and \ref{discrepancy} hold deterministically for $G$.  Let $\omega = \log \log n$.

We show that $\tg(G) = (1+o(1))\tg_p(K_n)$, from which the result follows by Theorem~\ref{fractional}.  Once again denote the players by ``A'' and ``B''; we give a strategy for player A.  We play two games, the ordinary game on $G$ and the fractional game on $K_n$.  The ordinary game is the ``real'' game on which both players play, while the fractional game is ``imagined'' by A to guide his strategy for the ordinary game.  Except where otherwise specified, we postpone firing vertices until the end of the game.

Player A plays as follows.  On A's turns, he first plays according to some optimal strategy for the fractional game, and then makes the same move in the ordinary game.  When B plays in the ordinary game, A simply duplicates this move in the fractional game.  Since $\sqrt{\omega} \rightarrow \infty$, by Lemma~\ref{fractional_restricted} we may assume (without affecting the asymptotic length of the game) that A plays no more than $\sqrt{\omega} pn$ chips on any one vertex in the fractional game (and hence also in the ordinary game).  Likewise, we may assume by Lemma~\ref{random_restricted} that B plays no more than $\sqrt{\omega} pn$ chips on any one vertex in the ordinary game (and hence also in the fractional game).  Thus, all vertices have at most $2\sqrt{\omega} pn$ chips at all times in both games.

Suppose first that A is Min.  We aim to show that $\tg(G) \le (1+o(1))p\tg(K_n) + o(pn^2)$ (which suffices since $\tg(K_n) = \Theta(n^2)$).  By Lemmas~\ref{fractional} and \ref{fractional_restricted}, the fractional game ends after at most $(1+o(1))p\tg(K_n)$ turns.  If the ordinary game finishes first, then we are done, so suppose otherwise.  It suffices to show that Min can force the ordinary game to end after $o(pn^2)$ additional rounds or, equivalently, that the ordinary game can be brought to a volatile configuration by adding another $o(pn^2)$ chips.

Let $c$ and $c_f$ denote the configurations of the ordinary and fractional games, respectively.  We would like to fire vertices while maintaining the property that $c(v) \ge c_f(v)$ for all vertices $v$.  (Initially, we have equality for all vertices.)  Since the fractional game has ended, $c_f$ is volatile.  Let $F = (u_1, u_2, \ldots, u_n)$ be a firing sequence of length $n$ for $c_f$ (the $v_i$ need not be distinct).  For $i \ge 1$, define $S_i = \{v : v \text{ appears at least $i$ times in S}\}$.  Since each vertex $u_i$ gains at most $p(n-1)$ chips from predecessors in $S$, we must have $c_f(v) > (i-1)p(n-1)$ whenever $v \in S_i$.  Let $k = \max\{i : S_i \not = \emptyset\}$.  Construct a sequence $F'$ by listing all elements of $S_k$, followed by all elements of $S_{k-1}$, and so on down to $S_1$, with the restriction that when listing the elements of $S_1$ we do so in order of their final appearances in $S$.  We claim that $F'$ is a legal firing sequence for $c_f$: since each chip in $S_i$ can fire $i-1$ times without ``assistance'' from earlier vertices, the only potential problems come from the firings in $S_1$, but each vertex receives at least as many chips before firing as it did under $F$.

We aim to show that $F'$ is also a legal firing sequence in the ordinary game (after adding $o(pn^2)$ more chips).  We do this by firing vertices in large groups.  With the $S_i$ defined as in the previous paragraph, let $\ell = \min\{i : \size{S_i} \ge n/\omega\}$.  Let $F^*$ be the portion of $F'$ consisting of those vertices in $S_k, S_{k-1}, \ldots, S_{\ell+1}$.  In the fractional game, each vertex has at most $2\sqrt{\omega}pn$ chips under $c_f$, and gains at most $p(n-1)$ more as $F'$ is fired; consequently, $k \le (2+o(1))\sqrt{\omega}$. Thus, $\size{F^*} \le (2+o(1))\sqrt{\omega} \cdot \frac{n}{\omega} = o(n)$.  For each $i \in \{1, 2, \ldots, \ell\}$, we divide the portion of $F'$ corresponding to $S_i$ into consecutive blocks with sizes between $n/\omega$ and $2n/\omega$.

We have thus expressed $F'$ in the form $(F^*, F_1, \ldots, F_m)$, where $\size{F^*} = o(n)$, no vertex appears more than once in any $F_i$, and each $F_i$ has size between $n/\omega$ and $2n/\omega$.  We aim to fire each subsequence in both games, possibly after adding a few extra chips in the ordinary game.  At all times we maintain the property that each vertex has at least as many chips in the ordinary game as in the fractional game.  Each time we attempt to fire a vertex $v$ in the ordinary game, we first add $\ceil{\deg(v) - p(n-1)}$ chips to $v$.  This ensures that $v$ loses exactly $p(n-1)$ chips in the course of firing; it also guarantees that whenever we may fire $v$ in the fractional game, we may also fire it in the ordinary game.  By Lemma~\ref{lem:expansion}, we add only $o(pn^2)$ chips across all $n$ firings.

We first fire all of $F^*$.  In the fractional game, each vertex gains at most $p\size{F^*}$ chips from these firings.  This need not be the case in the ordinary game.  To compensate, before firing any vertices in the ordinary game, we add $\ceil{p\size{F^*}}$ chips to each vertex; this ensures that, after firing, each vertex still has at least as many chips in the ordinary game as in the fractional game.  Since $\size{F^*} = o(n)$, we only add $o(pn^2)$ extra chips.

We next fire the $F_i$ in order.  Let $F_i = (v_1, v_2, \ldots, v_m)$, and recall from the construction of $F_i$ that the $v_j$ must be distinct.  When firing $F_i$, some vertices may receive more chips in the fractional game than in the ordinary game.  As before, we compensate for this discrepancy before firing.  To each $v_j$ we add $p\size{F_i}$ chips, to ensure that $v_j$ will have enough chips to fire when it is reached in the firing sequence; the total number of chips added is $p\size{F_i}^2 = o(pn^2/\omega)$.  Each vertex $v$ outside $F_i$ receives exactly $p\size{F_i}$ chips from the firing in the fractional game, but only $\size{F_i \cap N_G(v)}$ chips in the ordinary game.  Thus, to each vertex $v$ we add $\max\{\size{F_i \cap N_G(v)} - p\size{F_i}, 0\}$ chips; by Lemma~\ref{discrepancy}, the number of extra chips needed is only $o(pn^2/\omega)$.  The number of chips added to compensate for the discrepancy due to each $F_i$ is $o(pn^2/\omega)$, so the total number added throughout the full firing sequence is $o(pn^2)$.

Let $U$ be the set of vertices not appearing in $F'$.  All vertices not in $U$ have been fired at least once.  Since we have fired $n$ vertices, each vertex in $U$ has at least $pn$ chips in the fractional game, and hence also in the ordinary game.  By Lemma~\ref{lem:expansion}, we may now add another $o(pn)$ chips to each vertex in $U$ to ready it for firing.  Corollary~\ref{all_vxs_fire} now implies that we have reached a volatile configuration, and it follows that $\tg(G) \le (1+o(1))p\tg(K_n) + o(pn^2)$ as claimed.

Suppose now that A is Max.  We now aim to show that $$\tg(G) \ge (1+o(1))p\tg(K_n) - o(pn^2) = (1+o(1))p\tg(K_n).$$  Play both games until the ordinary game ends; By Lemma~\ref{random_restricted}, the ordinary game lasts for at least $(1+o(1))\tg(G)$ turns.  If the fractional game finishes first, then we are done, so suppose otherwise.  It suffices to show that Min can force the fractional game to end after $o(pn^2)$ additional rounds or, equivalently, that the fractional game can be brought to a volatile configuration by adding another $o(pn^2)$ chips.  This would imply that $(1+o(1))\tg_p(K_n) \le (1+o(1))\tg(G) + o(pn^2)$ which, by Lemma~\ref{fractional}, is equivalent to the desired lower bound on $\tg(G)$.

Let $c$ and $c_f$ be the current configurations of the ordinary and fractional games, respectively.  Initially $c_f(v) = c(v)$ for all vertices $v$; we aim to fire vertices while maintaining $c_f(v) \ge c(v)$ for all $v$.  During this process we may need to add $o(pn^2)$ extra chips to the fractional game.  Let $F = (u_1, u_2, \ldots, u_n)$ be a legal firing sequence under $c$.  Partition $F$ into contiguous subsequences $F_1, F_2, \ldots, F_k$, where each $F_i$ has size between $n/\omega$ and $2n/\omega$, and $k \le \omega$.  We show how to fire the $F_i$, in order, in both games.

Fix $i$; we aim to fire $F_i$ in both games and re-establish $c_f(v) \ge c(v)$ for all $v$, while adding only $o(pn^2/\omega)$ chips to the fractional game.  For sufficiently large $n$, Lemma~\ref{firing_limit} ensures that each vertex of $G$ appears at most $Cpn/\log^2 n$ times in $F_i$, for some constant $C$.  For $1 \le j \le Cpn/\log^2 n$, let $S_j$ be the set of vertices appearing at least $j$ times in $F_i$.  Define $\disc_j(v) = \size{N(v) \cap S_j} - p\size{S_j}$.  When $F_i$ is fired in both games, vertex $v$ receives $\sum_j (\disc_j(v) + p\size{S_j})$ chips in the ordinary game, but only $\sum_j p\size{S_j}$ chips in the fractional game.  To compensate for this discrepancy, before firing $F_i$, we add $\max\{\sum_j \disc_j(v), 0\}$ chips to each vertex $v$ in the fractional game.  By Lemma~\ref{discrepancy}, we have that
\begin{eqnarray*}
\sum_{v} \sum_j \disc_j(v) &\le & \sum_j o(pn)\size{S_j} + \sum_j O(n \log n)  \\
&= & o(pn^2/\omega) + O\left (n \log n \frac{Cpn}{\log^2 n} \right ) \\
&= & o(pn^2/\omega).
\end{eqnarray*}
We must exercise caution when firing, as vertices in $F_i$ may receive chips ``sooner'' in the ordinary game than in the fractional game.  To compensate for this, to each vertex $v$ appearing in $F_i$, we add an additional $p\size{F_i}$ chips; consequently, $v$ receives $\sum_j \size{N(v) \cap S_j}$ chips in the fractional game before any vertices of $F_i$ are fired, which ensures that each vertex in $F_i$ receives in advance, in the fractional game, all chips it receives from firing $F_i$ in the ordinary game.  Thus, when $v$ may be fired in the ordinary game, it may also be fired in the fractional game.  In total, this costs only $O(pn^2/\omega^2) = o(pn^2/\omega)$ extra chips.  Finally, when a vertex is fired in the ordinary game, it may lose $o(pn)$ fewer chips than in the fractional game; we compensate for this by adding extra chips to each vertex in the fractional game before it is fired.  Again, this requires adding only $o(pn^2/\omega)$ extra chips in total.

We now fire all $F_i$ in order.  For each $F_i$ fired, we add $o(pn^2/\omega)$ chips to the fractional game, so in total we add $o(pn^2)$ chips.  Since we have fired $n$ vertices in the fractional game, every vertex has either fired or has received $pn$ chips and hence may now fire.  Thus, by Lemma~\ref{all_vxs_fire}, the fractional game has ended.  Since we have added only $o(pn^2)$ chips to the fractional game, the desired lower bound on $\tg(G)$ follows.
\end{proof}

Throughout the section we have assumed that $pn \ge n^{2/\sqrt{\log n}}$.  However, this assumption is needed only to establish Lemma~\ref{firing_limit} (and is, consequently, used also in Lemma~\ref{discrepancy}).  If this lemma holds under the weaker assumption that $p \gg \log n / n$ (and with the weaker conclusion that every vertex fires only $o(pn)$ times), then the remainder of the proof of Theorem~\ref{main2} could be made to work as well.  In particular, the upper bound, which does not require Lemma~\ref{firing_limit} or the full strength of Lemma~\ref{discrepancy}, can already be established for $p$ in this range.  On the other hand, note that for $p \le \log n / n$, with positive probability $G(n,p)$ contains at least one isolated vertex, in which case the toppling game ends immediately.  An open and seemingly non-trivial problem is to investigate the toppling number of the giant component.

\end{document}